\newcommand{\bea}{\begin{eqnarray}}
\newcommand{\eea}{\end{eqnarray}}
\newcommand{\cla}{\mathcal{A}}
\newcommand{\clb}{\mathcal{B}}
\newcommand{\cld}{\mathcal{D}}
\newcommand{\cle}{\mathcal{E}}
\newcommand{\clf}{\mathcal{F}}
\newcommand{\clh}{\mathcal{H}}
\newcommand{\clk}{\mathcal{K}}
\newcommand{\cll}{\mathcal{L}}
\newcommand{\clm}{\mathcal{M}}
\newcommand{\cln}{\mathcal{N}}
\newcommand{\clr}{\mathcal{R}}
\def\textmatrix#1&#2\\#3&#4\\{\bigl({#1 \atop #3}\ {#2 \atop #4}\bigr)}
\def\dispmatrix#1&#2\\#3&#4\\{\left({#1 \atop #3}\ {#2 \atop #4}\right)}
\newcommand{\be}{\begin{equation}}
\newcommand{\ee}{\end{equation}}
\newcommand{\ben}{\begin{eqnarray*}}
\newcommand{\een}{\end{eqnarray*}}
\newcommand{\NI}{\noindent}
\newcommand{\bi}{\begin{itemize}}
\newcommand{\ei}{\end{itemize}}
\newcommand{\C}{\mathbb{C}}
\newcommand{\D}{\mathbb{D}}
\newcommand{\Z}{\mathbb{Z}}
\newcommand{\T}{\mathbb{T}}
\newcommand{\E}{\mathcal{E}} 
\newcommand{\F}{\mathcal{F}}
\newcommand{\he}{{H}^2_\mathcal{E}(\mathbb{D})}
\newcommand{\dt}{\mathcal{D}}
\theoremstyle{definition}
\theoremstyle{plain}
\newtheorem{thm}{Theorem}[section]
\newtheorem{cor}[thm]{Corollary}
\newtheorem{lem}[thm]{Lemma}
\newtheorem{prop}[thm]{Proposition}
\theoremstyle{definition}
\newtheorem{defn}[thm]{Definition}
\newtheorem{rem}[thm]{Remark}
\newtheorem{ex}[thm]{Example}
\numberwithin{equation}{section}
\let\phi=\varphi
\begin{document}

\title[Characteristic function of a power partial isometry]{Characteristic function of a power partial isometry}
	
\author[Babbar]{Kritika Babbar}
\address{Indian Institute of Technology Roorkee, Department of Mathematics,
		Roorkee-247 667, Uttarakhand,  India}
\email{kritika@ma.iitr.ac.in, kritikababbariitr@gmail.com}

\author[Maji]{Amit Maji}
\address{Indian Institute of Technology Roorkee, Department of Mathematics,
		Roorkee-247 667, Uttarakhand,  India}
\email{amit.maji@ma.iitr.ac.in, amit.iitm07@gmail.com ({Corresponding author)}}

\subjclass[2010]{47A45, 47A48, 47B35, 46E40, 47A20, 46E40, 30H10}


\keywords{Characteristic function, functional model, c.n.u.\ contraction, partial isometries, power partial isometry, Hardy space, Toeplitz operator, multipliers}

\begin{abstract}
The celebrated Sz.-Nagy-Foia\c{s} model theory says that there is a bijection between the class of purely contractive analytic functions and the class of completely non-unitary (c.n.u.) contractions modulo unitary equivalence. In this paper we provide a complete classification of the purely contractive analytic functions such that the associated contraction is a c.n.u.\ power partial isometry. As an application of our findings, we determine a class of contractive polynomials such that the associated c.n.u.\ contraction is of the explicit diagonal form $S \oplus N \oplus C$, where $S$ and $C^*$ are unilateral shifts and $N$ is nilpotent. Finally, we obtain a characterization of operator-valued symbols for which the corresponding Toeplitz operator on vector-valued Hardy space is a partial isometry.
\end{abstract}
\maketitle

\section{Introduction}
One of the fundamental problems in operator theory is to find a complete unitary invariant of a bounded linear operator on a separable Hilbert space. In this context, the characteristic function of a contraction on a Hilbert space plays a vital role and acts as a bridge between operator theory and function theory. 
Since the classification problem of any bounded linear operator is generally challenging, research on this problem focuses on examining specific classes of tractable operators for which one can find nice and useful (unitary) invariants. The primary goal of this paper is to provide a comprehensive classification of purely contractive analytic functions (in particular, polynomial functions) such that the associated contraction is a completely non-unitary (c.n.u.\ in short) power partial isometry, and also to address the problem of  characterizing operator-valued symbols for which the Toeplitz operator on vector-valued Hardy space is a partial isometry.

For a single contraction on a Hilbert space, the characteristic function has a long-standing tradition.\ It has widespread applications across various disciplines like transfer function theory, perturbation theory, control theory, stability theory, and network realizability theory and so on (cf.\ \cite{Bercovici}, \cite{GKrein}, \cite{livsic2}, \cite{sznagy}, \cite{Helton}). The notion of characteristic function was first introduced by Liv\v{s}ic \cite{livsic} and major development has been done by Sz.-Nagy and Foia\c{s} \cite{sznagy} in their dilation and model theory. Thereafter, a lot of research (in particular, constant and polynomial characteristic functions) has been done by Wu \cite{PYW}, Bagchi and Misra \cite{GM}, Foia\c{s} and Sarkar \cite{foias}, Foia\c{s}, Pearcy and Sarkar \cite{foias2}. Characteristic function serves as a complete unitary invariant for c.n.u.\ contractions. More precisely, two c.n.u.\ contractions are unitarily equivalent if and only if their characteristic functions coincide (see Sz.-Nagy and Foia\c{s} \cite{sznagy}). On the other hand, an important class of contractions is power partial isometries, that is, operators whose every positive power is also a partial isometry. The decomposition of power partial isometries was first initiated by Halmos and Wallen in \cite{halmospowers}, where they demonstrated that each power partial isometry is a direct sum of a unitary operator, a unilateral shift, a backward shift, and truncated shifts. Clearly, the c.n.u.\ component of a power partial isometry is the combining components of unilateral shift, backward shift, and truncated shifts. In this paper, we obtain the characteristic function of a power partial isometry which is a purely contractive analytic function with partially isometric coefficients. Thus the natural question arises: 

\textsf{Does a purely contractive analytic function with partially isometric coefficients generate a power partial isometry?}

In fact, we present one of the main results (see Section \ref{sec3} below):

\begin{thm}
Let $\Theta : \D \rightarrow \clb(\E,\E_*)$ be a purely contractive analytic function such that 
\[
\Theta( z)=\sum_{m=1}^\infty  \theta_mz^m, \quad (z\in \D)
\]
where $\theta_m \in \mathcal{B}(\E,\E_*)$ are partial isometries for all $m \geq 1$. Then there exist a Hilbert space 
\[
\clh=\left\{\left(I-T_\Theta T_\Theta^* \right)f\oplus \Delta_\Theta g: f \in H^2_{\E_*}(\D), g \in [\he]^\perp\right\},
\]
where $\Delta_\Theta$ is a constant projection, and a c.n.u.\ power partial isometry $T$ on $\clh$ defined by
\[
T^*(u \oplus v) = e^{-it}(u(e^{it}) - u(0)) \oplus e^{-it} v(e^{it}), \quad (u \oplus v \in \clh)
\]
such that the characteristic function of $T$ coincides with $\Theta$.
\end{thm}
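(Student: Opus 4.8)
The plan is to produce $T$ as the Sz.-Nagy--Foia\c{s} functional model of $\Theta$, and then to use the partially isometric coefficients to split this model into a pure backward shift and a $C_{\cdot 0}$ piece all of whose powers are partial isometries.

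First I would invoke the Sz.-Nagy--Foia\c{s} model \cite{sznagy}. Since $\Theta(0)=0$, the function $\Theta$ is automatically purely contractive, so the canonical model space $\clh_\Theta=\big(H^2_{\mathcal E_*}(\D)\oplus\overline{\Delta_\Theta L^2_{\mathcal E}(\T)}\big)\ominus\{\Theta w\oplus\Delta_\Theta w:w\in\he\}$, together with the compression to $\clh_\Theta$ of multiplication by $e^{it}$, carries a c.n.u.\ contraction $T$ whose adjoint acts as in the statement and whose characteristic function coincides with $\Theta$. This disposes of the existence of $T$ and of the coincidence of characteristic functions; what remains is to rewrite $\clh_\Theta$ in the stated coordinates and to prove that $T$ is a power partial isometry.

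The engine of the argument, and the step I expect to be the main obstacle, is the analysis of the defect. Testing the contractive multiplier $T_\Theta$ on constants gives $\sum_m\|\theta_m e\|^2=\sum_m\|P_{\mathcal I_m}e\|^2\le\|e\|^2$, where $\mathcal I_m=\mathrm{ran}\,\theta_m^*\theta_m$; hence $\sum_m P_{\mathcal I_m}\le I$, which forces the initial spaces $\mathcal I_m$ to be mutually orthogonal. Testing instead on two-term polynomials $e_0+e_1 z^l$ forces every Fourier coefficient $\sum_j\theta_j^*\theta_{j+l}$ of $\Theta^*\Theta$ with $l\neq 0$ to vanish. Consequently $\Theta^*\Theta=\sum_m\theta_m^*\theta_m$ is a constant projection, so $\Delta_\Theta=(I-\Theta^*\Theta)^{1/2}$ is the constant projection $P$ onto $\mathcal G:=\ker\Theta(e^{it})$. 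Writing $\mathcal E=\mathcal G^\perp\oplus\mathcal G$, the isometry $w\mapsto\Theta w\oplus\Delta_\Theta w$ has range $\Theta H^2_{\mathcal G^\perp}(\D)\oplus H^2_{\mathcal G}(\D)$, whence $\clh_\Theta$ splits as the orthogonal sum $\big(H^2_{\mathcal E_*}(\D)\ominus\Theta H^2_{\mathcal G^\perp}(\D)\big)\oplus\big(L^2_{\mathcal G}(\T)\ominus H^2_{\mathcal G}(\D)\big)$. Since $\Theta^*\Theta=P_{\mathcal G^\perp}$ shows $T_\Theta$ to be a partial isometry with $T_\Theta T_\Theta^*$ the projection onto $\Theta H^2_{\mathcal G^\perp}(\D)$, the first summand equals $(I-T_\Theta T_\Theta^*)H^2_{\mathcal E_*}(\D)$ and the second equals $\Delta_\Theta[\he]^\perp$; this is precisely the description of $\clh$ in the statement, and one checks $T^*$ leaves both summands invariant, so $T=T_1\oplus T_2$.

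Finally I would verify that each summand is a power partial isometry. On the second summand $T_2^*$ is multiplication by $e^{-it}$ on the $M_{e^{-it}}$-invariant subspace $[H^2_{\mathcal G}(\D)]^\perp$ of $L^2_{\mathcal G}(\T)$, hence an isometry; thus $T_2$ is a pure backward shift, every power of which is a coisometry and so a partial isometry. On the first summand, writing $\clk_1=H^2_{\mathcal E_*}(\D)\ominus\Theta H^2_{\mathcal G^\perp}(\D)$, we have $(T_1^*)^n=(S^*)^n|_{\clk_1}$; since the restriction of the coisometry $(S^*)^n$ to an invariant subspace is a partial isometry exactly when that subspace is invariant under $S^n(S^*)^n=P_{z^nH^2_{\mathcal E_*}(\D)}$, it suffices to show $\Theta H^2_{\mathcal G^\perp}(\D)$ is invariant under the truncations $P_{z^nH^2_{\mathcal E_*}(\D)}$. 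Here the orthogonality of the $\mathcal I_m$ pays off: decomposing $w=\sum_m w^{(m)}$ with $w^{(m)}$ valued in $\mathcal I_m$ gives $\Theta w=\sum_m z^m\theta_m w^{(m)}$, so deleting the Taylor terms of $\Theta w$ below degree $n$ amounts to truncating each channel $w^{(m)}$ below degree $n-m$, which again lies in $\Theta H^2_{\mathcal G^\perp}(\D)$. Hence every power of $T_1$, and therefore of $T=T_1\oplus T_2$, is a partial isometry, and $T$ is the desired c.n.u.\ power partial isometry with characteristic function $\Theta$, its detailed form recovering the Halmos--Wallen components \cite{halmospowers}.
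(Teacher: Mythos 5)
Your proof is correct, but it reaches the power-partial-isometry property by a genuinely different mechanism than the paper. Both arguments begin the same way: contractivity of $\Theta$ together with the partial isometry of each $\theta_m$ forces $\theta_i^*\theta_j=0$ and $\theta_j\theta_i^*=0$ for $i\neq j$ (the paper's Lemma \ref{Main-Lemma}; your orthogonality of the initial spaces $\mathcal{I}_m$ plus the cross-term test, which to be airtight should be run with $e_0$ taken in one of the initial spaces $\mathcal{I}_{j+l}$ and with $e_1$ replaced by $\varepsilon e_1$, $\varepsilon\to 0$, so that positivity of the quadratic form kills the cross term), whence $\Theta(e^{it})^*\Theta(e^{it})$ is a constant projection and $\Delta_\Theta$ is the constant projection onto $\mathcal{G}=\ker\Theta(e^{it})$. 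From there the paper works entirely in Fourier coordinates: using Lemma \ref{Function-Lemma} it writes out $T$ and $T^{*p}$ on a general element $(I-T_\Theta T_\Theta^*)f\oplus\Delta_\Theta g$ and verifies $T^{*p}T^pT^{*p}=T^{*p}$ by direct computation, and then computes $\cld_T$, $\cld_{T^*}$ and $\Theta_T$ explicitly, exhibiting the unitaries $\tau,\tau_*$ that realize the coincidence with $\Theta$. You instead split $\clh$ into the two reducing pieces $\bigl(H^2_{\E_*}(\D)\ominus\Theta H^2_{\mathcal{G}^\perp}(\D)\bigr)\oplus 0$ and $0\oplus\Delta_\Theta[\he]^\perp$, dispose of the second as a backward shift, and handle the first by the clean criterion that $S^{*n}$ restricted to an $S^*$-invariant subspace is a partial isometry if and only if that subspace is invariant under $S^nS^{*n}$, which you verify channel by channel from $\Theta w=\sum_m z^m\theta_m w^{(m)}$; the c.n.u.\ property and the coincidence of the characteristic functions you outsource to the general Sz.-Nagy--Foia\c{s} model theorem rather than recomputing them. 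Your route is shorter and more structural --- it essentially anticipates the Halmos--Wallen splitting that the paper only extracts later --- while the paper's computational route pays for itself by producing the explicit descriptions of $\cld_T$, $\cld_{T^*}$ and $\Theta_T$ that are reused in Theorem \ref{spaces} and in the remarks that follow it.
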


Our approach essentially builds upon the Sz.-Nagy and Foia\c{s}' construction of a functional model. Additionally, we prove that the characteristic function of a truncated shift is a monomial with a partially isometric coefficient which is a specific instance of the polynomial characteristic function. In \cite{foias}, authors established that the characteristic function of a c.n.u.\ contraction $T$ on $\clh$ is a polynomial if and only if $T$ has an upper triangular matricial form
\[
T=\begin{pmatrix}
S & * & *\\
0 & N & *\\
0 & 0 & C
\end{pmatrix}
\]
with respect to the orthogonal decomposition $\clh=\clh_s \oplus \clh_t \oplus \clh_b$, where $S, N, \text{ and  }C$ are unilateral shift, nilpotent, and backward shift, respectively. The question which follows naturally is: 
\vspace{-0.4 cm}
\begin{center}
\textsf{When will this matrix form be diagonal?} 
\end{center}
As a direct consequence of our result, we identify a particular class of polynomial characteristic functions for which this representation is diagonal and also describe the orthogonal decomposition spaces explicitly (see Theorem \ref{spaces} below).

In the next part of this article, we characterize some particular class of Toeplitz operators, namely the partially isometric Toeplitz operators with operator-valued symbol on the vector-valued Hardy space. The class of Toeplitz and analytic Toeplitz operators is one of the most important classes of tractable operators.
It is a highly active research topic with a growing list of applications and links in function theory and operator theory. The characterization of nonzero Toeplitz operators that are partial isometries was initiated by Brown and Douglas in \cite{BD-Partially}. They proved that nonzero partially isometric Toeplitz operators are of the form $T_{\phi}$ and $T^{*}_{\phi}$, where $\phi$ is an inner function. In \cite{DPS-Partially}, Deepak, Pradhan, and Sarkar generalized this result in the scalar-valued Hardy space over polydisc. The similar factorization result holds for the partially isometric Toeplitz operators with operator-valued symbols in the polydisc setting which was recently studied by Sarkar in \cite{ss} and he also posed the following question:

\begin{center}
\textsf{Characterize the class of  partially isometric symbols $\Phi \in L^\infty_{\clb(\cle)}$ such that $T_\Phi$ is a partial isometry.}
\end{center}

\NI
As a byproduct of our main result, we recognize a specific class of partially isometric symbols for which the corresponding Toeplitz operator is a partial isometry, which motivates us to settle the question posed by Sarkar in \cite{ss} under certain conditions. Indeed, we have the following result (see Theorem \ref{Characterization-Partial}):

\begin{thm}
For finite-dimensional Hilbert spaces $\cle$ and $\cle_*$, let $\Phi \in L^\infty_{\clb(\cle, \cle_*)}$ be such that $\Phi(e^{it})=\sum\limits_{m=-\infty}^\infty \hat{\Phi}(m)e^{imt}$ is a nonzero partial isometry a.e.\ on $\T$. Then $T_\Phi$ is a partially isometric Toeplitz operator if and only if the following conditions are satisfied:
\begin{enumerate}
\item $\Phi_+(e^{it})^*\Phi_+(e^{it})$ and $\Phi_-(e^{it})\Phi_-(e^{it})^*$ are  operator-valued constant functions a.e.\ on $\T$ where $\Phi_+ \in H^\infty_{\clb(\cle,\cle_*)}$ and $\Phi_-^* \in H^\infty_{\clb(\cle_*,\cle)}$ are contractive analytic functions.
\item $\hat{\Phi}(m)^*\hat{\Phi}(-n)=0_\cle$ and $\hat{\Phi}(-n)\hat{\Phi}(m)^*=0_{\cle_*}$ for all $m,n \geq 1$.
\end{enumerate}
\end{thm}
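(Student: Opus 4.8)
The plan is to reduce the statement to the single operator identity that $T_\Phi$ is a partial isometry if and only if $T_\Phi^* T_\Phi$ is an orthogonal projection, and then to compute $T_\Phi^* T_\Phi$ explicitly from the block-Toeplitz (Fourier-coefficient) picture of the symbol. I would first split the symbol into its analytic and co-analytic parts, $\Phi = \Phi_+ + \Phi_-$ with $\Phi_+ \in H^\infty_{\clb(\cle,\cle_*)}$ and $\Phi_-^* \in H^\infty_{\clb(\cle_*,\cle)}$, and record the Toeplitz--Hankel product formula $T_{AB} = T_A T_B + H_{A^*}^* H_B$, together with the fact that the Hankel operator of an analytic (or co-analytic) symbol vanishes. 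Applying this to $T_\Phi^* T_\Phi = T_{\Phi^*} T_\Phi$ collapses the analytic--analytic and co-analytic--analytic products to genuine Toeplitz operators and isolates the single surviving Hankel correction, giving $T_\Phi^* T_\Phi = T_{\Phi^*\Phi} - H_{\Phi_-}^* H_{\Phi_-}$. Since $\Phi$ is a partial isometry a.e., $\Phi^*\Phi$ is a projection-valued symbol, which is what makes the right-hand side tractable.

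For sufficiency I would argue that condition (1) says precisely that $\Phi_+$ and $\Phi_-^*$ are rigid (inner up to a constant projection): if $\Phi_+^*\Phi_+$ is a constant projection then $T_{\Phi_+}^* T_{\Phi_+}$ is the Toeplitz operator of a constant projection, hence a projection, so $T_{\Phi_+}$ is a partial isometry; the dual computation using $\Phi_-\Phi_-^*$ constant shows $T_{\Phi_-}$ is a partial isometry. Condition (2), read as orthogonality of the ranges of the positive-index coefficients against those of the negative-index coefficients (and likewise for the co-ranges), is what forces the mixed Toeplitz terms $T_{\Phi_+^*\Phi_-}$ and $T_{\Phi_-^*\Phi_+}$ to reduce and cancel, so that $T_\Phi^* T_\Phi = T_{\Phi_+}^* T_{\Phi_+} + T_{\Phi_-}^* T_{\Phi_-}$ becomes a sum of two projections with orthogonal ranges, i.e.\ a projection. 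I expect finite dimensionality of $\cle$ and $\cle_*$ to be used here mainly to guarantee convergence of the relevant coefficient sums and the structural constancy of the symbols involved.

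For necessity I would assume $T_\Phi^* T_\Phi$ is a projection and extract (1) and (2) by matching Fourier coefficients in the idempotency relation $(T_\Phi^* T_\Phi)^2 = T_\Phi^* T_\Phi$, reading off the block entries $\sum_{l\ge 0}\hat\Phi(l-j)^*\hat\Phi(l-k)$. Expanding along the diagonal and the first off-diagonals should yield the vanishing of the mixed coefficient products $\hat\Phi(m)^*\hat\Phi(-n)$ and $\hat\Phi(-n)\hat\Phi(m)^*$, giving (2); feeding (2) back in should force the Gram-type sums for $\Phi_+$ and $\Phi_-$ to be coefficient-independent, giving the constancy of $\Phi_+^*\Phi_+$ and $\Phi_-\Phi_-^*$ in (1).

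The main obstacle will be the necessity direction, specifically establishing the rigidity in (1) and correctly accounting for the constant coefficient $\hat\Phi(0)$. The difficulty is that condition (2) alone does not annihilate the cross-contributions coming from $\hat\Phi(0)$; one must use the hypothesis that $\Phi$ is partially isometric a.e.\ (so that both $\Phi^*\Phi$ and $\Phi\Phi^*$ are projections) in tandem with (1) to show that the analytic and co-analytic parts are genuinely orthogonal, including the constant term, so that the mixed terms disappear. This is the operator-valued analogue of the Brown--Douglas rigidity phenomenon, in which idempotency of $T_\Phi^* T_\Phi$ is upgraded to the inner-function structure of the pieces; keeping the block-matrix bookkeeping consistent while invoking finite-dimensionality to control ranks is where the care is needed.
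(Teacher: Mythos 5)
There is a genuine gap, and it sits in your sufficiency argument. You assert that condition (1) makes $T_{\Phi_+}$ and $T_{\Phi_-}$ partial isometries ("rigid, inner up to a constant projection") and that $T_\Phi^*T_\Phi$ then splits as a sum of two projections with orthogonal ranges. But condition (1) only says $\Phi_+(e^{it})^*\Phi_+(e^{it})$ is a \emph{constant} positive operator, not a projection, and it genuinely need not be one: take $\cle=\cle_*=\C^2$ and $\Phi(e^{it})=\mathrm{diag}\bigl(\overline{b(e^{it})},1\bigr)$ with $b$ a Blaschke factor, $b(0)=-a\neq 0$. Then $\Phi$ is unitary a.e., $T_\Phi$ is a co-isometry, conditions (1) and (2) hold (the positive-index coefficients vanish), yet $\Phi_+=\hat\Phi(0)=\mathrm{diag}(-\bar a,1)$ gives $\Phi_+^*\Phi_+=\mathrm{diag}(|a|^2,1)$, which is constant but not a projection, so $T_{\Phi_+}$ is not a partial isometry. (In the paper's necessity analysis one sees why: $\Phi_+^*\Phi_+=\psi_0\psi_0^*$ for the constant coefficient $\psi_0$ of an inner function, and such a coefficient need not be a partial isometry.) There is also a bookkeeping problem: with the paper's convention both $\Phi_+$ and $\Phi_-$ contain $\hat\Phi(0)$, so $\Phi=\Phi_++\Phi_--\hat\Phi(0)$ and your proposed identity $T_\Phi^*T_\Phi=T_{\Phi_+}^*T_{\Phi_+}+T_{\Phi_-}^*T_{\Phi_-}$ double-counts the constant term. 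The actual sufficiency proof cannot avoid the constant coefficient: it extracts from the pointwise relation $\Phi\Phi^*\Phi=\Phi$ together with (1) and (2) several auxiliary identities, namely $\phi_j\bigl(\sum_m\phi_m^*\phi_m\bigr)=\phi_j$ and $\bigl(\Phi_-\Phi_-^*\bigr)\phi_{-j}=\phi_{-j}$ for $j\ge 1$, $\phi_m\phi_0^*\phi_{-j}=0$ for $m,j\ge 1$, and one more relation pinning down $\phi_0$, and only then verifies $T_\Phi T_\Phi^*T_\Phi=T_\Phi$ directly on monomials $\eta e^{ijt}$. Your sketch contains none of these, and the "sum of orthogonal projections" structure you aim for simply does not hold in general.

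The necessity direction is also underdeveloped. Matching Fourier coefficients in $(T_\Phi^*T_\Phi)^2=T_\Phi^*T_\Phi$ is not a route anyone has carried out here, and it is far from clear it yields the \emph{constancy} of $\Phi_+^*\Phi_+$ and $\Phi_-\Phi_-^*$. The paper instead first proves a factorization theorem: if $T_\Phi$ is a nonzero partial isometry then $[\cln(T_\Phi)]^\perp$ is $M_z$-invariant, Beurling--Lax--Halmos gives $\clr(T_\Phi^*)=T_\Psi H^2_\clf(\D)$ for an inner $\Psi$, and Douglas' lemma produces an inner $\Theta$ with $T_\Phi=T_\Theta T_\Psi^*$ and $\theta_m\psi_n^*=0$ for $m,n\ge1$. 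Condition (1) then falls out because $\Phi_+=\Theta\psi_0^*$ and $\Phi_-=\theta_0\Psi^*$, so $\Phi_+^*\Phi_+=\psi_0\psi_0^*$ and $\Phi_-\Phi_-^*=\theta_0\theta_0^*$ are constants. Condition (2) is obtained by a separate device: the function $f=(T_\Phi M_z-M_zT_\Phi)(\eta z^{m_0-1})=\phi_{-m_0}\eta$ lies in $\clr(T_\Phi)$, and the partial-isometry property forces $L_\Phi^*f$ to be analytic, which kills $\phi_m^*\phi_{-m_0}$ for $m\ge1$. If you want to salvage your outline, you would need to replace both halves with arguments of this kind; as written, neither direction closes.
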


The structure of the rest of the paper is organized as follows: In Section \ref{sec2}, we set all the notations and definitions that will be used throughout the paper. Section \ref{sec3} focuses on  determining the characteristic function of a power partial isometry and characterize the class of contractive analytic functions for which the associated c.n.u.\ contraction is a power partial isometry. Section \ref{sec4} provides a characterization of partially isometric Toeplitz operators with operator-valued symbols. In Section \ref{sec5}, we illustrate some concrete examples to support our result.

\section{Preliminaries}{\label{sec2}}

This section compiles all the notations, definitions, and results that are used in this paper. It is assumed that every Hilbert space is a complex separable Hilbert space. For a Hilbert space $\clh$, $I_\clh$ and $0_\clh$  represent the identity operator and zero operator on $\clh$, respectively. If $\clh$ is clear from the context, we frequently write $I$ and $0$ without the subscript. Let $\clb(\clh)$ be the $C^*$-algebra of all bounded linear operators on $\clh$. For $T \in \clb(\clh)$, $\cln(T)$ and $\clr(T)$ stand for the kernel and range of $T$, respectively. An operator $T \in \clb(\clh)$ is said to be a contraction if $\|Th\|\leq \|h\| $ for all $h \in \clh$, and it is  completely non-unitary (c.n.u.\ in short) if there does not exist any nonzero reducing subspace $\cll$ of $\clh$ such that $T|_{\cll}$ is unitary. A contraction $T$ is pure if $\|T^{*m}h\| \rightarrow 0$ for all $h \in \clh$ as $m \rightarrow \infty$. We say that $T \in\mathcal{B(H)}$ is an isometry if  $\Vert Th \Vert=\Vert h\Vert $ for all $h \in \mathcal{H}$, and $T$ is a partial isometry if $\Vert Th\Vert=\Vert h \Vert$ for all $h \in [\cln (T)]^\perp$. An operator $T \in \clb(\clh)$ is an orthogonal projection if $T=T^2=T^*$. A subspace $\clm$ is invariant under $T$ if $T\clm \subseteq \clm$ and $\clm$ is reducing under $T$ if it is invariant under $T$ and $T^*$ both.

Let $\cle$ and $\cle_*$ be two Hilbert spaces. We will denote by $L^2_\cle$  the space of $\cle$-valued square integrable functions on the unit circle $\T$ with respect to the normalized Lebesgue measure. Let $H^2_{\cle}(\D)$ denote the $\cle$-valued Hardy space on the open unit disc $\D$ defined as 
\[
H^2_{\cle}(\D) = \left\{ f=\sum\limits_{m=0}^\infty a_mz^m:\ a_m \in \cle,\  \sum\limits_{m=0}^\infty \|a_m\|^2 <\infty \right\},
\]
and we often identify $H^2_{\cle}(\D)$ (in the sense of radial limits) as a closed subspace of $L^2_\cle$ without making explicit distinction. With this identification $L^2_\cle = H^2_{\cle}(\D) \oplus [H^2_{\cle}(\D)]^{\perp}$. 
Also, $L^\infty_{\clb(\cle,\cle_*)}$  denotes  the algebra of $\clb(\cle,\cle_*)$-valued  bounded functions on $\T$ with respect to operator supremum norm and $H^\infty_{\clb(\cle,\cle_*)}$ denotes the algebra of $\clb(\cle,\cle_*)$-valued bounded analytic functions on $\D$. For $\Phi \in L^\infty_{\clb(\cle,\cle_*)}$, let $L_\Phi$ denote the  Laurent operator  from $L^2_\cle$ to $L^2_{\cle_*}$defined as
\[
L_\Phi h= \Phi h \quad (h \in L^2_\cle).
\]
Let $P_+^{\cle}$ be the orthogonal projection of $L^2_\cle$ onto $H^2_\cle(\D)$. The Toeplitz operator $T_\Phi$ from $H^2_\cle(\D)$ to $H^2_{\cle_*}(\D)$ is defined by 
\[
T_\Phi h=P_+^{\cle_*}(\Phi h) \quad (h \in H^2_\cle(\D)).
\]
In particular, if $\cle=\cle_*$ and $\Phi(z) =zI$, then we use $T_\Phi$ as $M_z^\cle$. We will frequently use $M_z$ if $\cle$ is clear from the context. The Toeplitz  operator $T_\Phi$ is characterized by the operator equation $(M_z^{\cle_*})^*T_\Phi M_z^\cle=T_\Phi$. If $\Phi \in H^\infty_{\clb(\cle,\cle_*)}$, then $T_\Phi$ is called an analytic Toeplitz operator and is characterized by the equation $M_z^{\cle_*} T_\Phi=T_\Phi M_z^\cle$.

Recall some basic definitions which will be used throughout this note.
An operator $T \in \mathcal{B(H)}$ is said to be a power partial isometry if $T^n$ is a partial isometry for all $n \geq 1$. It is a large class of operators including isometries, co-isometries, orthogonal projections and truncated shifts etc. For a  power partial isometry $T$, we write $E_k=T^{*k}T^k$ and $F_k=T^kT^{*k}$ as the initial and final projections for all $k \geq 0$. Recall that  $E_k \geq E_{k+1}$ and $F_k  \geq F_{k+1}$ for all $k \geq 0$. 

\begin{lem}[cf. \cite{halmospowers}] \label{powerpartial-lem1}
Let $T \in \mathcal{B(H)}$ be a power partial isometry. Then
\begin{enumerate}
\item $E_kE_l=E_lE_k$ and  $F_kF_l=F_lF_k$  for all $k,l \geq 0$.
\item $E_kF_l=F_lE_k$ for all $k,l \geq 0$.
\item $TE_{k+1}=E_{k}T$ for all $k \geq 0$.
\item $TF_{k}=F_{k+1}T$ for all $k \geq 0$.
\end{enumerate}
\end{lem}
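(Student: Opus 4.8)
The plan is to take parts (1)--(4) in a non-obvious order: first deduce (1) from the nesting of the two chains, then isolate the genuine content in (2), and finally derive the intertwining relations (3) and (4) as easy consequences of (2) together with the partial-isometry identities $TE_1=T$ and $F_1T=T$ (both of which follow from $TT^*T=T$, since $E_1=T^*T$ and $F_1=TT^*$).

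For (1), I would use that two orthogonal projections $P\ge Q$ always satisfy $PQ=QP=Q$. Since $\ker T^k\subseteq\ker T^{k+1}$ and $\operatorname{ran}T^{k+1}\subseteq\operatorname{ran}T^k$, the given inequalities $E_k\ge E_{k+1}$ and $F_k\ge F_{k+1}$ extend by transitivity to $E_k\ge E_l$ and $F_k\ge F_l$ whenever $k\le l$; hence $E_kE_l=E_lE_k$ and $F_kF_l=F_lF_k$ for all $k,l$.

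The crux is (2), and here I would first record an elementary lemma: \emph{if $P,Q$ are orthogonal projections such that $PQP$ is again a projection, then $PQ=QP$.} To prove it, put $S=PQ(I-P)$ and compute $SS^*=PQ(I-P)QP=PQP-(PQP)^2=0$, using $Q^2=Q$ and $(PQP)^2=PQP$; hence $S=0$, so $PQ=PQP$ is self-adjoint and therefore $PQ=(PQ)^*=QP$. To apply this, I would verify the factorization
\[
F_lE_kF_l=T^l E_{k+l}(T^*)^l,
\]
which follows from $T^l(T^*)^{k+l}=F_l(T^*)^k$ and $T^{k+l}(T^*)^l=T^kF_l$. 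Because $T^{k+l}$ is a partial isometry, $E_{k+l}$ is a projection; then using $(T^*)^lT^l=E_l$ together with $E_lE_{k+l}=E_{k+l}$ (from the chain, as $l\le k+l$) gives $\bigl(T^l E_{k+l}(T^*)^l\bigr)^2=T^lE_{k+l}E_lE_{k+l}(T^*)^l=T^lE_{k+l}(T^*)^l$, so $F_lE_kF_l$ is a projection. The lemma with $P=F_l$ and $Q=E_k$ then yields $E_kF_l=F_lE_k$.

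Finally, (3) and (4) fall out algebraically. For (3), write $TE_{k+1}=T(T^*)^{k+1}T^{k+1}=F_1(T^*)^kT^{k+1}=F_1E_kT$; since $F_1E_k=E_kF_1$ by (2) and $F_1T=T$, this equals $E_kT$. Symmetrically, for (4), $F_{k+1}T=T^{k+1}(T^*)^kE_1=TF_kE_1=TE_1F_k=TF_k$, using (2) and $TE_1=T$. I expect the main obstacle to be part (2): the families $\{E_k\}$ and $\{F_l\}$ are a priori unrelated initial and final projections (a \emph{single} partial isometry need not have commuting initial and final projections), so the proof must genuinely exploit that \emph{all} powers of $T$ are partial isometries; the factorization $F_lE_kF_l=T^lE_{k+l}(T^*)^l$ and the $PQP$-lemma are precisely what convert that hypothesis into commutativity.
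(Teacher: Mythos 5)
Your proof is correct and complete. The paper itself gives no proof of this lemma --- it is stated with a bare citation to Halmos--Wallen --- so there is nothing in the text to diverge from; your argument is essentially the classical one from that reference. In particular, the device you isolate (if $P,Q$ are orthogonal projections and $PQP$ is idempotent, then $PQ=QP$, proved via $S=PQ(I-P)$ and $SS^*=PQP-(PQP)^2=0$) is exactly the lemma Halmos and Wallen use, and your factorization $F_lE_kF_l=T^lE_{k+l}(T^*)^l$ together with $E_lE_{k+l}=E_{k+l}$ is the correct way to feed the hypothesis that \emph{every} power of $T$ is a partial isometry into that lemma. Parts (1), (3), (4) are routine as you present them; the reductions $TE_{k+1}=F_1E_kT=E_kF_1T=E_kT$ and $F_{k+1}T=TF_kE_1=TE_1F_k=TF_k$ all check out.
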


\NI
Let $k \geq 1$ be any natural number. A truncated shift of index $k$, denoted by $J_k$, is defined on $\mathcal{H}=\underbrace{\clh_0 \oplus\clh_0 \oplus \cdots \oplus\clh_0}_{k\text{-}times}$ as
\[
J_k(x_1,x_2, \ldots, x_k)=(0, x_1, \ldots, x_{k-1}) \quad (x_i \in \clh_0, i =1, \ldots, k).
\]
Here $\clh_0$ is a Hilbert space and $\underbrace{\clh_0 \oplus\clh_0 \oplus \cdots \oplus\clh_0}_{k\text{-} times}$ is identified with $\clh_0 \otimes \C^k$. Note that $J_1=0$.

Let us recall the Halmos and Wallen decomposition theorem for power partial isometry given in \cite{halmospowers}.

\begin{thm}
Let $T \in \clb(\clh)$  be a power partial isometry. Then there exist subspaces $\clh_u, \clh_s, \clh_b$, and $\clh_k\,\, (k \geq 1)$ reducing $T$   and 
\[
\clh= \clh_u \oplus\clh_s \oplus \clh_b \oplus \left(\bigoplus\limits _{k=1}	^\infty \clh_k\right),
\]
such that $T|_{\clh_u}$ is a unitary, $T|_{\clh_s}$ is a unilateral shift, $T|_{\clh_b}$ is a backward shift and $T|_{\clh_k}$ is a truncated shift of index $k$. 
\end{thm}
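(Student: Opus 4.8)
The backbone of the argument is the pair of decreasing sequences of orthogonal projections $E_k=T^{*k}T^k$ and $F_k=T^kT^{*k}$. By Lemma \ref{powerpartial-lem1} these all commute with one another, so the strong operator limits $E_\infty=\lim_{k\to\infty}E_k$ and $F_\infty=\lim_{k\to\infty}F_k$ exist and are again commuting orthogonal projections. Passing to the limit in the relations $TE_{k+1}=E_kT$ and $TF_k=F_{k+1}T$ (parts (3)--(4)) yields $TE_\infty=E_\infty T$ and $TF_\infty=F_\infty T$; since $E_\infty,F_\infty$ are self-adjoint they also commute with $T^*$, so they are reducing projections for $T$. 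Consequently the four mutually orthogonal reducing projections
\[
E_\infty F_\infty,\quad E_\infty(I-F_\infty),\quad (I-E_\infty)F_\infty,\quad (I-E_\infty)(I-F_\infty)
\]
decompose $\clh$ into four reducing summands, which I name $\clh_u,\clh_s,\clh_b$, and $\clh_t$ respectively.

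Next I would identify the restriction on each of the first three summands. On $\clh_u$ one has $E_\infty=F_\infty=I$; since $E_k\ge E_\infty$ and $F_k\ge F_\infty$ for every $k$, this forces $E_k=F_k=I$ there, so both $T$ and $T^*$ are isometric and $T|_{\clh_u}$ is unitary. On $\clh_s$ one has $E_\infty=I$, so $T$ is an isometry, while $F_\infty=0$ gives $\|T^{*k}x\|^2=\langle F_kx,x\rangle\to 0$; thus $T|_{\clh_s}$ is a pure isometry, hence a unilateral shift by the Wold decomposition. Applying the same reasoning to $T^*$ on $\clh_b$ (where $F_\infty=I$ and $E_\infty=0$) shows $T^*|_{\clh_b}$ is a unilateral shift, i.e.\ $T|_{\clh_b}$ is a backward shift. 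On the remaining summand $\clh_t$ both $E_\infty=0$ and $F_\infty=0$, so $T^kx\to 0$ and $T^{*k}x\to 0$ strongly; this is the piece that must split into truncated shifts.

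Finally I would decompose $\clh_t$. For each $k\ge 1$ set
\[
\mathcal{L}_k=\cln(T^*)\cap\big(\clr(E_{k-1})\ominus\clr(E_k)\big),
\]
which lies in $\clh_t$ because $x\in\cln(T^*)$ forces $F_1x=0$ while $E_kx=0$ by construction. A short computation using $E_mT=TE_{m+1}$ and $T^*x=0$ shows that for $x\in\mathcal{L}_k$ the vectors $x,Tx,\dots,T^{k-1}x$ are orthogonal and of equal norm while $T^kx=0$, and that $T^*(T^ix)=T^{i-1}x$ for $1\le i\le k-1$ with $T^*x=0$; hence $\clh_k:=\bigoplus_{i=0}^{k-1}T^i\mathcal{L}_k$ reduces $T$ and $T|_{\clh_k}$ is a truncated shift of index $k$ with generating space $\clh_0=\mathcal{L}_k$. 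Distinct $\mathcal{L}_k$ are mutually orthogonal since the layers $\clr(E_{k-1})\ominus\clr(E_k)$ are, and because $E_j$ commutes with $F_1$ (part (2)) each $E_j$ preserves $\cln(T^*)$, so the telescoping identity $\sum_{k=1}^N(E_{k-1}-E_k)=I-E_N\to I$ on $\clh_t$ gives $\cln(T^*)\cap\clh_t=\bigoplus_k\mathcal{L}_k$. The crux of the proof --- and the step I expect to be the main obstacle --- is then the completeness assertion $\clh_t=\bigoplus_{k\ge1}\clh_k$: one must verify that iterating $T$ on the generators exhausts $\clh_t$, which I would establish by showing that the orthogonal complement of $\bigoplus_k\clh_k$ in $\clh_t$ meets $\cln(T^*)$ trivially and is annihilated by every $T^{*k}$, forcing it to be $\{0\}$ via the strong convergence $T^{*k}\to 0$ on $\clh_t$. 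Assembling the four pieces yields the asserted reducing decomposition.
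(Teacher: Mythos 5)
This theorem is the Halmos--Wallen decomposition, which the paper states as a recalled result and does not prove (it cites \cite{halmospowers}); so there is no in-paper proof to compare against, and your argument must be judged on its own. What you have written is essentially the classical Halmos--Wallen argument, and it is sound in its main lines: the strong limits $E_\infty,F_\infty$ of the commuting decreasing projections are reducing, the four products carve out $\clh_u,\clh_s,\clh_b,\clh_t$, and the identifications of the first three restrictions (unitary, pure isometry via Wold, backward shift) are correct. Your generating spaces $\mathcal{L}_k=\cln(T^*)\cap\bigl(\clr(E_{k-1})\ominus\clr(E_k)\bigr)=(E_{k-1}-E_k)(I-F_1)\clh$ and the spaces $\clh_k=\bigoplus_{i=0}^{k-1}T^i\mathcal{L}_k$ agree with the explicit formula $\clh_k=\bigoplus_{n=1}^k(E_{k-n}-E_{k-n+1})(F_{n-1}-F_n)\clh$ that the paper records right after the theorem, since $T^i\mathcal{L}_k\subseteq(E_{k-1-i}-E_{k-i})(F_i-F_{i+1})\clh$ by the intertwining relations. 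One small omission: you only argue orthogonality of the layers $\mathcal{L}_k$ and of the chain $x,Tx,\dots,T^{k-1}x$ inside a single $\clh_k$; you should also note that $T^i\mathcal{L}_k\perp T^j\mathcal{L}_{k'}$ for $(i,k)\neq(j,k')$, which follows immediately from the fact that these sit in ranges of the mutually orthogonal projections $(E_{k-1-i}-E_{k-i})(F_i-F_{i+1})$.

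The one genuine inaccuracy is in your plan for the completeness step $\clh_t=\bigoplus_k\clh_k$. The residual space $\clm=\clh_t\ominus\bigoplus_k\clh_k$ is \emph{not} annihilated by the powers $T^{*k}$ (if it were, it would lie in $\cln(T^*)$ and you would be done in one line by your own observation that $\clm\cap\cln(T^*)=\{0\}$); that intermediate claim cannot be established directly. The correct completion uses exactly the ingredients you have assembled, but in a different order: $\clm$ is reducing, and $I-F_1$ maps $\clm$ into $\clm\cap\cln(T^*)=\{0\}$, so $F_1|_{\clm}=I$; applying this to $T^*x\in\clm$ gives $F_2x=TF_1T^*x=TT^*x=x$, and by induction $F_kx=x$ for every $k$ and every $x\in\clm$. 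Hence
\[
\|x\|^2=\langle F_kx,x\rangle\longrightarrow\langle F_\infty x,x\rangle=0,
\]
since $F_\infty=0$ on $\clh_t$, forcing $\clm=\{0\}$. With that repair (and the cross-orthogonality remark above) your proof is complete.
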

It is easy to observe from the Halmos and Wallen decomposition that for $k \geq 1$ (see \cite{BA}),
\[
\clh_k= \bigoplus\limits_{n=1}^k\left(E_{k-n}-E_{k-n+1}\right)\left(F_{n-1}-F_n\right)\clh.
\]

\begin{defn}[{\it Contractive analytic function}]
An operator-valued analytic function  $\Theta : \D \rightarrow \clb(\E,\E_*)$ is said to be contractive if 
$$\Vert \Theta( z)a \Vert\leq\Vert a \Vert \quad (a \in \E),$$
and purely contractive if it also follows $\Vert  \Theta(0) a \Vert < \Vert a \Vert\,\,  \,\, (a \in \cle,\,\, a \neq 0)$.
\end{defn}

\begin{defn}[{\it Inner function}]
A contractive analytic function $\Theta : \D \rightarrow \clb(\E,\E_*)$ is called inner if $\Theta(e^{it})$ is an isometry from $\cle$ to $\cle_*$ almost everywhere (a.e.\ in short) on $\T$.
\end{defn}

\begin{defn}[{\it Characteristic function}]
For a contraction $T$ on $\clh$, define the defect operators $D_T=(I-T^*T)^{{1}\over{2}}$ and $ D_{T^*}=(I-TT^*)^{{1}\over{2}}$  with defect spaces $\cld_T=\overline{\clr(D_T)}$ and $\cld_{T^*}=\overline{\clr(D_{T^*})}$. Then the characteristic function of $T$ is the purely contractive analytic function $\Theta_T: \D \rightarrow \clb(\cld_T,\cld_{T^*})$ defined by 
\[
\Theta_T( z)=\left[-T+ z D_{T^*}(I- z T^*)^{-1}D_T\right]\big|_{\cld_T}
\]
for $ z \in \D$.
\end{defn}

Let $\Theta: \D \rightarrow \clb(\E,\E_*)$ and $\Phi : \D \rightarrow \clb(\F,\F_*)$ be two contractive analytic functions. They are said to coincide if there exist unitary operators $\tau: \E \rightarrow \F$ and $\tau_*: \E_* \rightarrow \F_*$ such that $\tau_*\Theta( z)=\Phi( z)\tau$ for all $z \in \D$. It is well known that two c.n.u.\ contractions $T$ on $\clh$ and $S$ on $\clk$ are unitarily equivalent if and only if their characteristic functions coincide (see \cite{sznagy}).

\section{Characteristic function}{\label{sec3}}

In this section, we shall discuss the characteristic function of a power partial isometry. More specifically, we obtain the characteristic function of a power partial isometry and observe that each coefficient in the characteristic function is a partial isometry. Conversely, a purely contractive analytic function with partially isometric coefficients generates a power partial isometry. As an application, we get the diagonal matricial representation of a class of operators whose characteristic functions coincide with the contractive analytic polynomial with partially isometric coefficients.

Recall the following result given in \cite{sznagy} (see Chapter VI), proof of which is straightforward.

\begin{lem}\label{char-fac-lem1}
Let $T_n$ be a contraction on a Hilbert space $\clh_n$ for $n \geq1$.
Let $\clh=\bigoplus\limits_{n=1}^\infty \clh_n$ and $T =\bigoplus \limits_{n=1}^\infty T_n \in \clb(\clh)$. Then
\[
D_T=\bigoplus\limits_{n=1}^\infty D_{T_n},\quad D_{T^*}=\bigoplus\limits_{n=1}^\infty D_{T_n^*},\quad \cld_T=\bigoplus\limits_{n=1}^\infty \cld_{T_n},\quad\cld_{T^*}=\bigoplus\limits_{n=1}^\infty \cld_{T_n^*},
\]
and hence
\[
\Theta_{T}(z)=\bigoplus\limits_{n=1}^\infty \Theta_{T_n}(z) \quad\quad (z \in \D).
\]
\end{lem}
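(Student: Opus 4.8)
The plan is to reduce everything to the single-operator definition of the characteristic function and to exploit the fact that the defect operators, defect spaces, and the resolvent all split along the orthogonal direct sum. First I would verify the decompositions of the two defect operators. Since $T=\bigoplus_{n=1}^\infty T_n$ acts diagonally, so do $T^*T=\bigoplus_{n=1}^\infty T_n^*T_n$ and $TT^*=\bigoplus_{n=1}^\infty T_nT_n^*$; hence $I-T^*T=\bigoplus_{n=1}^\infty (I-T_n^*T_n)$ and likewise for $I-TT^*$. Because the positive square root of a block-diagonal positive operator is the block-diagonal operator of the square roots, I obtain $D_T=\bigoplus_{n=1}^\infty D_{T_n}$ and $D_{T^*}=\bigoplus_{n=1}^\infty D_{T_n^*}$. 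Taking closed ranges componentwise then gives $\cld_T=\overline{\clr(D_T)}=\bigoplus_{n=1}^\infty \cld_{T_n}$ and $\cld_{T^*}=\bigoplus_{n=1}^\infty \cld_{T_n^*}$.

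Next I would handle the resolvent factor appearing in the characteristic function. For fixed $z\in\D$, the operator $I-zT^*=\bigoplus_{n=1}^\infty (I-zT_n^*)$ is block diagonal, and since $\|T_n^*\|\le 1$ each block $I-zT_n^*$ is boundedly invertible with $\|(I-zT_n^*)^{-1}\|\le (1-|z|)^{-1}$ uniformly in $n$. Therefore the inverse is again block diagonal, $(I-zT^*)^{-1}=\bigoplus_{n=1}^\infty (I-zT_n^*)^{-1}$, and the uniform bound guarantees this defines a genuine bounded operator on $\clh$. Composing block-diagonal operators preserves the diagonal structure, so
\[
-T+zD_{T^*}(I-zT^*)^{-1}D_T=\bigoplus_{n=1}^\infty\Big(-T_n+zD_{T_n^*}(I-zT_n^*)^{-1}D_{T_n}\Big).
\]

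Finally I would restrict to the defect space. Under the identification $\cld_T=\bigoplus_{n=1}^\infty \cld_{T_n}$ established in the first step, the restriction of the block-diagonal operator above to $\cld_T$ is precisely the direct sum of the restrictions to each $\cld_{T_n}$, which by definition are the characteristic functions $\Theta_{T_n}(z)$. This yields $\Theta_T(z)=\bigoplus_{n=1}^\infty \Theta_{T_n}(z)$, viewed as an operator from $\bigoplus_n\cld_{T_n}$ to $\bigoplus_n\cld_{T_n^*}$, for every $z\in\D$.

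The computation is genuinely routine, as the author's remark ``proof of which is straightforward'' indicates; the only point deserving care is the bounded invertibility of $I-zT^*$ on the full direct sum. The potential obstacle is ensuring that the componentwise inverses assemble into a single bounded operator rather than merely a densely defined one. This is resolved by the uniform estimate $\|(I-zT_n^*)^{-1}\|\le (1-|z|)^{-1}$, which holds for all $n$ because every $T_n$ is a contraction; hence the direct sum of the inverses is bounded by $(1-|z|)^{-1}$ and the manipulation is fully justified. With the defect spaces and the resolvent both shown to split, the identity for $\Theta_T$ follows immediately from the defining formula.
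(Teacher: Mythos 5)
Your proof is correct and is exactly the routine verification the paper has in mind when it cites this lemma from Sz.-Nagy--Foia\c{s} and calls the proof straightforward: the defect operators, defect spaces, and resolvent all split block-diagonally, and the uniform bound $\|(I-zT_n^*)^{-1}\|\le (1-|z|)^{-1}$ justifies assembling the inverses. No gaps; nothing further is needed.
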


Let $T$ be a power partial isometry on $\clh$. Following Halmos-Wallen decomposition, we have $T$-reducing subspaces $\clh_u$, $\clh_b$, $\clh_s$ and 
$\clh_k$ $(k \geq 1)$ such that
\[
\clh=\clh_u\oplus\clh_s\oplus\clh_b \oplus \left(\bigoplus\limits_{k=1}^\infty \clh_k\right).
\]
Also, $T_u=T|_{\clh_u}$ is a unitary, $T_s=T|_{\clh_s}$ is a unilateral shift, $T_b=T|_{\clh_b}$ is a backward shift and $T_k=T|_{\clh_k}$ is a truncated shift of index $k$. Now using the above Lemma \ref{char-fac-lem1}, we readily get
\[
\dt_T=\dt_{ T_b}\oplus \left(\bigoplus\limits_{k=1}^\infty \dt_{ T_k}\right).
\]
Since $D_{T_b^{*}} = 0$, $\Theta_{T_b}( z)=0 \,\, \forall \,\,  z \in \D $ and hence the characteristic function of $T$ 
\[
\Theta_T( z)=\bigoplus\limits_{k= 1}^\infty \Theta_{T_k}( z) \qquad (z \in \D).
\]
Observe that for $k\geq 1,$
\[
\cld_{T_k}=(I-T_k^*T_k)\clh_k= (E_0-E_1)\clh_k, 
\]
where 
\[
\clh_k=\bigoplus\limits_{n=1}^k \left(E_{k-n}-E_{k-n+1}\right)\left(F_{n-1}-F_n\right)\clh.
\]
Note that for $n> k$
\[
(E_0-E_1)\left(E_{k-n}-E_{k-n+1}\right) =0.
\]
Thus
\[
\cld_{T_k}= \bigoplus\limits_{n=1}^k (E_0-E_1) \left(E_{k-n}-E_{k-n+1}\right)\left(F_{n-1}-F_n\right)\clh = (E_0-E_1)(F_{k-1}-F_k)\clh.  
\]
Hence  
\[
\dt_{T_k}=\clr ( (E_0-E_1)(F_{k-1}-F_k)).
\]
Similarly, we can prove 
\[
\dt_{T_k^*}=\clr  ((E_{k-1}-E_k)(F_0-F_1)).
\]
Recall that $\cln(T_k) = \cld_{T_k}$ and hence the characteristic function of $T_k$
is
\begin{align*}
\Theta_{T_k}( z) & = \left( -T_k+ z(I-T_kT_k^*)(I- z T_k^*)^{-1}(I-T_k^*T_k )\right)|_{\dt_{T_k}} \\
& = z(I-TT^*)\left(I+ z T^*+\cdots+ z^{k-1} T^{*(k-1)}\right)(I-T^*T)|_{\dt_{T_k}}\\
& = (F_0 -F_1)\left(zI+ z^2 T^*+ \cdots + z^{k} T^{*(k-1)}\right)(E_0-E_1)|_{\dt_{T_k}}\\
\end{align*}
for all $z \in \D$. Now using Lemma \ref{powerpartial-lem1}, for any $l \geq 0$,
we have
\begin{align*}
(F_0 -F_1)T^{*l}(E_0-E_1)(F_{k-1}-F_k) &= T^{*l}(F_l -F_{l+1})(E_0-E_1)(F_{k-1}-F_k)\\
&= T^{*l}(E_0-E_1)(F_l -F_{l+1})(F_{k-1}-F_k)\\
&=\begin{cases}
T^{*l}(E_0-E_1)(F_{l}-F_{l+1}), &  \mbox{if~} l=k-1\\
0 &  \mbox{if~} l \neq k-1.\\
\end{cases}
\end{align*}
Therefore, for $ g \in \clh$
\[
\Theta_{T_k}( z)((E_0-E_1)(F_{k-1}-F_k)g)= T^{*k-1}(E_{0}-E_1)(F_{k-1}-F_k)g z^k \quad (z \in \D). 
\]
Now for $f=f_0 +\sum\limits_{k=1}^\infty f_k \in \dt_T$, where $f_0 \in \dt_{T_b}$ and $f_k \in \dt_{T_k}$, the characteristic function of $T$ becomes
\[
\Theta_T( z)f=\sum\limits _{k=1}^\infty   T^{*(k-1)}f_kz^k=\sum\limits _{k=1}^\infty P_{\mathcal{H}_k} T^{*(k-1)}fz^k,
\]
where $P_{\mathcal{H}_k}$ is the orthogonal projection of $\clh$ onto $\mathcal{H}_k$. Now set $C_k = P_{\mathcal{H}_k} T^{*(k-1)}$ for $k \geq 1$. Since $T$ is a power partial isometry and $\clh_k$ reduces $T$ for each $k$, we get
\[
C_kC_k^*C_k = P_{\mathcal{H}_k} T^{*(k-1)} T^{(k-1)}T^{*(k-1)} = P_{\mathcal{H}_k} T^{*(k-1)}= C_k.
\]
Therefore, each $C_k$ is a partial isometry. Furthermore, note that the characteristic function of truncated shift of index $k$ is a monomial of degree $k$ whose coefficient is also partial isometry. \\

We record the aforementioned discussion in the following.

\begin{thm} \label{mainthm}
Let $T$ be a power partial isometry on $\clh$. Then the characteristic function of $T$ is $\Theta_T( z) = \left[\sum\limits _{k=1}^\infty P_{\mathcal{H}_k} T^{*k-1}z^k \right]\Big|_{\dt_T}$ for $z \in \D$, where $P_{\mathcal{H}_k}$ is the orthogonal projection of $\clh$ onto $\mathcal{H}_k$. Moreover, each coefficient in the characteristic function is a partial isometry.
\end{thm}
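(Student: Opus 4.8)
The plan is to reduce everything to the truncated-shift summands via the Halmos--Wallen decomposition, compute the characteristic function of each truncated shift explicitly, and then recombine. First I would invoke the Halmos--Wallen theorem to write $T = T_u \oplus T_s \oplus T_b \oplus \bigoplus_{k=1}^\infty T_k$ with respect to the reducing decomposition $\clh = \clh_u \oplus \clh_s \oplus \clh_b \oplus \bigoplus_k \clh_k$, and apply Lemma~\ref{char-fac-lem1} so that $\Theta_T = \bigoplus \Theta_{T_n}$. The unitary part has $D_{T_u} = D_{T_u^*} = 0$ and contributes nothing; the unilateral shift $T_s$ is an isometry, so $D_{T_s} = 0$ and $\dt_{T_s} = \{0\}$; and the backward shift $T_b$ is a co-isometry, so $D_{T_b^*} = 0$ and hence $\Theta_{T_b} \equiv 0$. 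This leaves only the truncated shifts to carry the characteristic function, so the whole problem collapses to computing each $\Theta_{T_k}$.

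Next I would compute the defect spaces of $T_k$. Since $\clh_k$ reduces $T$, the initial and final projections of $T_k$ are restrictions of $E_j, F_j$, and combining the formula $\clh_k = \bigoplus_{n=1}^k (E_{k-n} - E_{k-n+1})(F_{n-1} - F_n)\clh$ with the orthogonality $(E_0 - E_1)(E_{k-n} - E_{k-n+1}) = 0$ for $n > k$ gives $\dt_{T_k} = \clr((E_0 - E_1)(F_{k-1} - F_k))$ and dually $\dt_{T_k^*} = \clr((E_{k-1} - E_k)(F_0 - F_1))$. I would then write $\Theta_{T_k}$ from its definition. Since $\dt_{T_k} = \cln(T_k)$ the term $-T_k$ vanishes on restriction, and because the truncated shift is nilpotent ($T_k^k = 0$) the resolvent is the finite sum $(I - zT_k^*)^{-1} = I + zT_k^* + \cdots + z^{k-1} T_k^{*(k-1)}$, giving $\Theta_{T_k}(z) = (F_0 - F_1)(zI + z^2 T^* + \cdots + z^k T^{*(k-1)})(E_0 - E_1)\big|_{\dt_{T_k}}$.

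The main obstacle is showing that, among the $k$ terms in this sum, only the $z^k$ term survives on $\dt_{T_k}$; this is exactly where Lemma~\ref{powerpartial-lem1} does the work. The relations $TF_k = F_{k+1}T$ and $E_kF_l = F_lE_k$ let me push the final projections through $T^{*l}$ and obtain, for each $l \geq 0$,
\[
(F_0 - F_1)T^{*l}(E_0 - E_1)(F_{k-1} - F_k) = T^{*l}(E_0 - E_1)(F_l - F_{l+1})(F_{k-1} - F_k),
\]
and the product $(F_l - F_{l+1})(F_{k-1} - F_k)$ is nonzero only when $l = k-1$, by monotonicity and mutual orthogonality of the spectral differences. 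Hence $\Theta_{T_k}(z)$ acts as $g \mapsto z^k\, T^{*(k-1)}(E_0 - E_1)(F_{k-1} - F_k) g$, a monomial of degree $k$. Summing over $k$ on a general $f = f_0 + \sum_k f_k \in \dt_T$ (with $f_0 \in \dt_{T_b}$, $f_k \in \dt_{T_k}$) yields $\Theta_T(z) f = \sum_k T^{*(k-1)} f_k z^k = \sum_k P_{\clh_k} T^{*(k-1)} f z^k$, the claimed formula.

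Finally, to see that each coefficient $C_k = P_{\clh_k} T^{*(k-1)}$ is a partial isometry I would verify $C_k C_k^* C_k = C_k$. Because $\clh_k$ reduces $T$, the projection $P_{\clh_k}$ commutes with $T$ and $T^*$, so $C_k^* = P_{\clh_k} T^{k-1}$; and because every power of a power partial isometry is a partial isometry, $T^{*(k-1)} T^{k-1} T^{*(k-1)} = T^{*(k-1)}$. Substituting and using $P_{\clh_k}^2 = P_{\clh_k}$ gives $C_k C_k^* C_k = P_{\clh_k} T^{*(k-1)} T^{k-1} T^{*(k-1)} = P_{\clh_k} T^{*(k-1)} = C_k$. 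The argument is essentially bookkeeping with the projection algebra of Lemma~\ref{powerpartial-lem1}; the one genuinely delicate point is the cancellation of the cross terms, which is what forces the characteristic function of a truncated shift of index $k$ to be a clean monomial of degree $k$.
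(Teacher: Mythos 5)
Your proposal is correct and follows essentially the same route as the paper: Halmos--Wallen decomposition plus Lemma~\ref{char-fac-lem1} to reduce to the truncated shifts, the identification $\dt_{T_k}=\clr((E_0-E_1)(F_{k-1}-F_k))$, the nilpotency of $T_k$ to truncate the resolvent, the projection algebra of Lemma~\ref{powerpartial-lem1} to kill all but the $l=k-1$ term, and the same $C_kC_k^*C_k=C_k$ verification for the coefficients. No gaps.
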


The above result raises the natural question in the following:
\textsf{\textsf{Is the converse of the above result true?}} 
To answer this question, we shall use the following easy yet powerful result.

\begin{lem}\label{Main-Lemma}
Let $\Theta : \D \rightarrow \clb(\E,\E_*)$ be a contractive analytic function such that $\Theta( z)=\sum\limits_{m=0}^\infty  \theta_mz^m,$ where each $\theta_m \in \clb(\cle,\cle_*)$ is a partial isometry for $m \geq 0$. Then $\theta_i^*\theta_j=0_{\cle}$ and $\theta_j\theta_i^*=0_{\cle_*}$ for all $i \neq j$.
\end{lem}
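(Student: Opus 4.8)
The plan is to deduce the two families of orthogonality relations from two operator inequalities, one governing the \emph{initial} projections $P_m := \theta_m^*\theta_m$ and one governing the \emph{final} projections $Q_m := \theta_m\theta_m^*$; since each $\theta_m$ is a partial isometry, both $P_m$ and $Q_m$ are orthogonal projections (on $\cle$ and $\cle_*$ respectively). First I would establish $\sum_{m\ge 0} P_m \le I_\cle$ and $\sum_{m\ge 0} Q_m \le I_{\cle_*}$. Both follow from contractivity by the same averaging device: for $a\in\cle$ one integrates $\langle \Theta(re^{it})^*\Theta(re^{it})a,a\rangle$ over the circle, and orthogonality of the exponentials collapses the resulting double sum to $\sum_m r^{2m}\|\theta_m a\|^2$, which is bounded by $\|a\|^2$ because $\Theta(re^{it})^*\Theta(re^{it})\le I_\cle$; letting $r\to 1^-$ (monotone convergence, all terms nonnegative and increasing in $r$) gives the first inequality. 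Integrating $\langle\Theta(re^{it})\Theta(re^{it})^* b,b\rangle$ instead, and using that $\|\Theta(z)\|\le 1$ forces $\Theta(z)\Theta(z)^*\le I_{\cle_*}$ as well, yields $\sum_m r^{2m}\|\theta_m^* b\|^2\le\|b\|^2$ and hence the second inequality.

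Next I would invoke the elementary fact that two orthogonal projections $P,Q$ with $P+Q\le I$ must satisfy $PQ=0$: testing the inequality on a vector $x\in\clr(P)$ gives $\|x\|^2+\|Qx\|^2\le\|x\|^2$, so $Q$ annihilates $\clr(P)$, whence $QP=0$ and $PQ=(QP)^*=0$. Applying this to the pairs $P_i,P_j$ and $Q_i,Q_j$ for any $i\ne j$ — each pair summing to something $\le I$ by the previous step — gives $P_iP_j=0$ and $Q_iQ_j=0$.

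Finally I would translate these projection identities into the claimed relations using the defining identity $\theta_m=\theta_m\theta_m^*\theta_m$ of a partial isometry. Writing $\theta_i^*=\theta_i^*Q_i$ and $\theta_j=Q_j\theta_j$ gives $\theta_i^*\theta_j=\theta_i^*(Q_iQ_j)\theta_j=0_\cle$, while $\theta_j=\theta_jP_j$ and $\theta_i^*=P_i\theta_i^*$ give $\theta_j\theta_i^*=\theta_j(P_jP_i)\theta_i^*=0_{\cle_*}$ (using $P_jP_i=(P_iP_j)^*=0$). Interchanging the roles of $i$ and $j$ covers all the remaining cross terms.

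I expect the only genuine subtlety to be the second inequality $\sum_m Q_m\le I_{\cle_*}$: the naive test on constant vectors $\Theta(z)a$ controls only the initial projections $P_m$, so to reach the final projections one must really use the co-analytic side $\Theta(z)\Theta(z)^*$ (equivalently, the adjoint of the analytic multiplication operator $T_\Theta$). Everything else — the interchange of sum and integral, the $r\to 1^-$ passage to the limit, and the projection lemma — is routine.
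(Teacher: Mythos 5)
Your proposal is correct and is essentially the paper's argument in slightly more formal dress: the key inequality $\sum_m\|\theta_m a\|^2\le\|a\|^2$ (your $\sum_m P_m\le I_\cle$) is exactly the paper's starting point, your projection lemma is the paper's observation that $\|\theta_i a\|=\|a\|$ on $\clr(\theta_i^*)$ forces $\theta_j a=0$ for $j\ne i$, and your co-analytic inequality $\sum_m Q_m\le I_{\cle_*}$ is what the paper obtains by applying the same reasoning to the contractive function $\tilde\Theta(z)=\sum_m\theta_m^*z^m$. No changes needed.
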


\begin{proof}
 Since $\Theta$ is a contractive analytic function, then for $a \in \cle$,  
\begin{equation} \label{eq1}
\sum\limits_{m=0}^\infty \Vert \theta_m a\Vert ^2 \leq \Vert a\Vert ^2.
\end{equation}
For $a \in \clr ( \theta_i^*)=[\cln  (\theta_i)]^\perp$, $\Vert \theta_i a \Vert=\Vert a\Vert$ as each $\theta_i$ is a partial isometry. By (\ref{eq1}), we get 
\[
\theta_j a=0 \quad \text{for all~~} j \neq i. 
\]
Hence, $\clr  (\theta_i^*) \subseteq \cln  (\theta_j)$ for $j \neq i$. Equivalently, $\theta_j \theta_i^*=0_{\cle_*}$ for all $j \neq i$. 

For the second one, observe that if $\Theta$ is a contractive analytic function, then $\tilde{\Theta}( z):=\sum\limits_{m=0}^\infty   \theta_m^*z^m$ is also a contractive analytic function. By the same argument, $\theta_i^*\theta_j=0_\cle$ for all $i \neq j$.
\end{proof}

\begin{lem}\label{Function-Lemma}
Let $\Theta : \D \rightarrow \clb(\E,\E_*)$ be a purely contractive analytic function such that 
\[
\Theta( z)=\sum_{m=1}^\infty  \theta_mz^m \quad (z \in \D),
\]
where $\theta_m \in \mathcal{B}(\E,\E_*)$ are partial isometries for $m\geq 1$. Then for any
$\displaystyle \sum_{n=0}^{\infty}a_n z^n \in H^2_{\E_*}(\D)$,
\[
\left(I - T_{\Theta}T_{\Theta}^*\right)\left(\displaystyle \sum_{n=0}^{\infty}a_n z^n \right) 
= a_0 +  \sum_{n=1}^{\infty} \left( I - \sum_{m=1}^{n}\theta_m \theta_m^* \right) a_nz^n.
\]
\end{lem}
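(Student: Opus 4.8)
The plan is to treat $T_\Theta$ as the analytic Toeplitz (multiplication) operator $T_\Theta : \he \to H^2_{\E_*}(\D)$ with symbol $\Theta$, and to compute $(I - T_\Theta T_\Theta^*) g$ for a general $g = \sum_{n=0}^\infty a_n z^n$ coefficient by coefficient. First I would record the two ingredients I intend to use. Writing $\Theta(z) = \sum_{m=1}^\infty \theta_m z^m$ with the convention $\theta_0 = 0$ (and $0$ is a partial isometry), Lemma \ref{Main-Lemma} applies and yields the orthogonality relations $\theta_j \theta_i^* = 0_{\E_*}$ and $\theta_i^* \theta_j = 0_\E$ for all $i \neq j$. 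The second ingredient is the multiplication action $T_\Theta\!\left(\sum_k b_k z^k\right) = \sum_{p \geq 1}\left(\sum_{m=1}^p \theta_m b_{p-m}\right) z^p$.

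Next I would compute $T_\Theta^*$ by coefficients. Pairing $\langle \Theta f, g\rangle$ against an arbitrary $f = \sum_k b_k z^k \in \he$ and reindexing the sums gives $(T_\Theta^* g)_k = \sum_{m=1}^\infty \theta_m^* a_{k+m}$ for each $k \geq 0$, the summation starting at $m = 1$ because $\theta_0 = 0$. I would then apply $T_\Theta$ to $h := T_\Theta^* g$, so that the $p$-th coefficient of $T_\Theta T_\Theta^* g$ is the double sum $\sum_{j=1}^p \theta_j h_{p-j} = \sum_{j=1}^p \sum_{m=1}^\infty \theta_j \theta_m^* \, a_{p-j+m}$.

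The crux of the argument, and the one place where the partial-isometry hypothesis does the real work, is that Lemma \ref{Main-Lemma} forces every cross term $\theta_j \theta_m^*$ with $j \neq m$ to vanish, so only the diagonal contribution $j = m$ survives. This collapses the double sum to $\left(\sum_{j=1}^p \theta_j \theta_j^*\right) a_p$, whence $T_\Theta T_\Theta^* g = \sum_{p=1}^\infty \left(\sum_{j=1}^p \theta_j \theta_j^*\right) a_p z^p$; in particular the constant term is annihilated since the $p = 0$ sum is empty. Subtracting from $g$ then produces $(I - T_\Theta T_\Theta^*) g = a_0 + \sum_{n=1}^\infty \left(I - \sum_{m=1}^n \theta_m \theta_m^*\right) a_n z^n$, which is exactly the claimed identity.

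The remaining steps are routine. The only points needing a little care are the justification of the coefficientwise formula for $T_\Theta^*$ together with the interchange of summations, which is legitimate because $g \in H^2_{\E_*}(\D)$ and $\Theta$ is bounded, and the bookkeeping around the index shift $\theta_0 = 0$. I expect the collapse of the double sum via the orthogonality relations of Lemma \ref{Main-Lemma} to be the conceptual heart of the proof, with everything else being formal manipulation in $H^2$.
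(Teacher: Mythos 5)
Your proposal is correct and follows essentially the same route as the paper: both compute $T_\Theta T_\Theta^*$ through its Fourier coefficients and use the orthogonality relations $\theta_j\theta_m^*=0$ ($j\neq m$) from Lemma \ref{Main-Lemma} to collapse the double sum to the diagonal terms $\sum_{m=1}^{n}\theta_m\theta_m^*a_n$. The only cosmetic difference is that the paper carries out the computation on monomials $az^k$ and extends by linearity, whereas you work coefficientwise on a general element of $H^2_{\E_*}(\D)$.
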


\begin{proof}
First note that for any $a \in \cle_*$ and $k \geq 1$, 
\[
	T_{\Theta}^*(az^k)=\sum\limits_{m=1}^k \theta_m^*az^{k-m}=\sum\limits_{m=0}^{k-1} \theta_{k-m}^*az^m.
\]
Then

\begin{align*}
T_{\Theta}T_{\Theta}^* \left(a z^k \right) 
& = T_{\Theta} \left(\sum\limits_{m=0}^{k-1} \theta_{k-m}^*az^m\right)\\
&=\sum\limits_{m=0}^{k-1}T_\Theta(\theta_{k-m}^*az^m)\\
&=\sum\limits_{m=0}^{k-1}\left(\sum\limits_{l=m+1}^\infty \theta_{l-m}\theta_{k-m}^*az^l\right)\\
&=\left(\sum\limits_{m=0}^{k-1}\theta_{k-m}\theta_{k-m}^*a\right)z^k\\
&=\left(\sum\limits_{m=1}^{k}\theta_m\theta_m^*a\right)z^k,
\end{align*}
where the second last equality follows using Lemma \ref{Main-Lemma}. Also note that $T_\Theta^*(a)=0$ for all $a \in \cle_*$.
Therefore, for $\sum\limits_{n=0}^\infty a_n z^n \in H^2_{\cle_*}(\D)$, 
\begin{align*}
\left(I - T_{\Theta}T_{\Theta}^* \right)\left(\displaystyle \sum_{n=0}^{\infty}a_n z^n \right) 
&= \displaystyle \sum_{n=0}^{\infty}a_n z^n - \sum_{n=1}^{\infty} \left(\sum_{m=1}^{n} \theta_m \theta_m^* a_n\right) z^{n}\\
& = a_0 +  \sum_{n=1}^{\infty} \left( I - \sum_{m=1}^{n}\theta_m \theta_m^* \right) a_nz^n.
\end{align*}
\end{proof}

Returning to the above question, first suppose that $\Theta : \D \rightarrow \clb(\E,\E_*)$ is a contractive analytic function such that 
\[
\Theta( z)=\sum_{m=0}^\infty  \theta_mz^m\quad (z \in \D),
\]
$\theta_m \in \mathcal{B}(\E,\E_*)$ are partial isometries for all $m \geq  0$ and $\theta_0 \neq 0$. Then 
\[
\|\Theta(0)a\| = \|\theta_0 a\| = \| a\|\,\, \quad (\forall \,\, a \in [\cln(\theta_0)]^\perp).
\]
Then $\Theta$ can not be purely contractive.
Thus, if $\Theta$ is purely contractive with partially isometric Fourier coefficients, then $\Theta(0)=0$.

We are now ready to state the main result of this section. Our proof is inspired by Sz.-Nagy-Foia\c{s}' model theory (see \cite[Chapter VI]{sznagy}).

\begin{thm}\label{mainthm}
Let $\Theta : \D \rightarrow \clb(\E,\E_*)$ be a purely contractive analytic function such that 
\[
\Theta( z)=\sum_{m=1}^\infty  \theta_mz^m \quad (z \in \D),
\]
where $\theta_m \in \mathcal{B}(\E,\E_*)$ are partial isometries for $m \geq 1$. Then there exist a Hilbert space 
\[
\clh=\left\{(I-T_\Theta T_\Theta^*)f\oplus \Delta_{\Theta} g: f \in H^2_{\E_*}(\D), g \in [\he]^\perp\right\},
\]
where $\Delta_{\Theta}$ is a constant projection, and a c.n.u. power partial isometry $T$ on $\clh$ defined by
\[
T^*(u \oplus v) = e^{-it}(u(e^{it}) - u(0)) \oplus e^{-it} v(e^{it}) \quad (u \oplus v \in \clh)
\]
such that the characteristic function of $T$ coincides with $\Theta$.
\end{thm}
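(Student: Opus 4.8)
The plan is to realise $T$ as the Sz.-Nagy-Foia\c{s} functional model operator attached to $\Theta$. Once $T$ is identified with this model operator, the coincidence of its characteristic function with $\Theta$ and its complete non-unitarity are automatic from \cite[Chapter VI]{sznagy}, so the substantive claims left to verify are: (i) the defect $\Delta_\Theta$ is a constant orthogonal projection; (ii) the model space coincides with the explicit space $\clh$ of the statement, in which $f$ and $g$ range independently; and (iii) $T$ is a \emph{power} partial isometry.

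First I would show that $\Delta_\Theta$ is a constant projection. On $\T$ one has $\Theta(e^{it})^*\Theta(e^{it})=\sum_{m,n\geq1}\theta_m^*\theta_n e^{i(n-m)t}$, and Lemma \ref{Main-Lemma} annihilates all cross terms, leaving the $t$-independent operator $\sum_{m\geq1}\theta_m^*\theta_m$; since the initial projections $\theta_m^*\theta_m$ are mutually orthogonal, this is an orthogonal projection, so $\Delta_\Theta=(I-\Theta^*\Theta)^{1/2}=I-\sum_{m\geq1}\theta_m^*\theta_m$ is the constant projection onto $\cle':=\clr(\Delta_\Theta)$, and $\overline{\Delta_\Theta L^2_\cle}=L^2_{\cle'}$ requires no closure. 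I then form $\clk=H^2_{\cle_*}(\D)\oplus L^2_{\cle'}$ and $\clm=\{\Theta w\oplus\Delta_\Theta w:w\in\he\}$, with model operator the compression of $M_z\oplus M_z$ to $\clk\ominus\clm$. As $\clm$ is $M_z\oplus M_z$-invariant, the adjoint of this compression restricts without further projection to $(u\oplus v)\mapsto Bu\oplus M_{\bar z}v$, where $B$ is the backward shift; this is exactly the displayed formula for $T^*$.

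The main obstacle is step (ii): a priori $\clm^\perp$ is governed by the coupling relation $T_\Theta^*u+P_+^{\cle}v=0$ rather than by an independent parametrisation. I would break the coupling as follows. By Lemma \ref{Function-Lemma} together with Lemma \ref{Main-Lemma}, $T_\Theta T_\Theta^*$ is itself an orthogonal projection (its $n$-th Fourier block is $\sum_{m=1}^n\theta_m\theta_m^*$), so $\clr(I-T_\Theta T_\Theta^*)=\cln(T_\Theta T_\Theta^*)=\cln(T_\Theta^*)$. Moreover $T_\Theta^*u$ takes values in $\overline{\sum_m\clr(\theta_m^*)}=\cle\ominus\cle'$ while $P_+^{\cle}v$ takes values in $\cle'$; these subspaces being orthogonal, the relation $T_\Theta^*u+P_+^{\cle}v=0$ splits into $T_\Theta^*u=0$ and $P_+^{\cle}v=0$. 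The first gives $u\in\clr(I-T_\Theta T_\Theta^*)$ and the second gives $v\in\Delta_\Theta[\he]^\perp$, so $\clm^\perp=\clh$ with the claimed independent parametrisation.

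Finally I would establish (iii), the genuinely new content. Write $\clh=L\oplus W$ with $L=\clr(I-T_\Theta T_\Theta^*)$ and $W=\Delta_\Theta[\he]^\perp$; the intertwining $B^{\cle}T_\Theta^*=T_\Theta^*B^{\cle_*}$ shows $L$ is backward-shift invariant and $W$ is clearly $M_{\bar z}$-invariant, so $T^{*n}=B^n|_L\oplus M_{\bar z}^n|_W$. The $W$-summand is a restricted shift, hence isometric, and $\|T^{*n}(u\oplus v)\|^2-\|u\oplus v\|^2=\|B^nu\|^2-\|u\|^2$ depends on $u$ alone. Now $\cln(T^{*n})$ is exactly the set of $\cle_*$-valued polynomials of degree $<n$ lying in $L$; since $L$ is cut out coefficientwise, its $n$-th Taylor coefficient lying in $\clr(I-\sum_{m=1}^n\theta_m\theta_m^*)$, orthogonality of $u\in L$ to $\cln(T^{*n})$ forces its first $n$ coefficients to vanish, whence $\|B^nu\|=\|u\|$. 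Thus each $T^{*n}$, and hence each $T^n$, is a partial isometry, so $T$ is a power partial isometry. The points demanding care are the projection property of $T_\Theta T_\Theta^*$ and the coefficientwise description of $L$, both of which rest on the orthogonality relations of Lemma \ref{Main-Lemma}.
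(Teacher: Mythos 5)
Your proposal is correct and works within the same framework as the paper: both realise $T$ as the Sz.-Nagy--Foia\c{s} model operator of $\Theta$, both use Lemma \ref{Main-Lemma} to show $\Theta(e^{it})^*\Theta(e^{it})=\sum_m\theta_m^*\theta_m$ is a constant projection, and both decouple the defining relation $T_\Theta^*u+P_+^{\cle}v=0$ by observing that the two terms take values in orthogonal subspaces of $\cle$ (the paper phrases this as $T_{\Delta_\Theta}T_\Theta^*=0$, you phrase it as $\clr(T_\Theta^*)\perp\cle'$ --- the same fact). The execution differs in two places. For the power partial isometry property, the paper verifies $T^{*p}T^pT^{*p}h=T^{*p}h$ by a direct Fourier-coefficient computation, whereas you split $\clh=L\oplus W$ with $L=\clr(I-T_\Theta T_\Theta^*)=\cln(T_\Theta^*)$ and $W=\Delta_\Theta[\he]^\perp$, note $T^{*n}=B^n|_L\oplus M_{\bar z}^n|_W$, and run a kernel-orthocomplement argument on the coefficientwise description of $L$; this is a cleaner, more structural route and is sound (the decreasing projections $I-\sum_{m=1}^n\theta_m\theta_m^*$ make $L$ backward-shift invariant and force $\|B^nu\|=\|u\|$ on $L\ominus\cln(B^n|_L)$). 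For the coincidence of the characteristic function, you simply cite the Sz.-Nagy--Foia\c{s} model theorem, which is legitimate since $\Theta$ is assumed purely contractive; the paper instead computes $\cld_T$, $\cld_{T^*}$, $\Theta_T(z)$, and the unitaries $\tau,\tau_*$ explicitly. Your shortcut is shorter, but the paper's explicit data (in particular the concrete form of $\cld_T$ and of $(I-T_\Theta T_\Theta^*)f$) is reused in Theorem \ref{spaces}, so you lose that payoff.
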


\begin{proof}
Suppose
\[
\Theta( z)=\sum_{m=1}^\infty \theta_mz^m    \quad\quad (z \in \D),
\]
where $\theta_m \in \mathcal{B}(\E,\E_*)$ are partial isometries for $m \geq 1$. 
Define an operator-valued function $\Delta_{\Theta} \in L^\infty_{\clb(\cle)}$ by 
\[
\Delta_{\Theta}(e^{it})=\left[I -\Theta(e^{it})^*\Theta(e^{it})\right]^{1 \over 2} \quad (\text{a.e.\ on } \T).
\]
Now using  Lemma \ref{Main-Lemma}, $\theta_i^* \theta_j=0_{\cle}$ and $\theta_i \theta_j^*=0_{\cle_*}$ for all $i \neq j$. Thus, for $\eta, \zeta \in \cle$ and $n\in \Z\setminus\{0\}$,
\begin{align*}
0=\langle \Theta\eta, z^n \Theta \zeta\rangle&=\frac{1}{2\pi}\int_{0}^{2\pi} \left\langle \Theta(e^{it})\eta,e^{int}\Theta(e^{it})\zeta\right\rangle_{\cle_*} dt\\
& = \frac{1}{2\pi}\int_{0}^{2\pi} e^{-int} \left\langle \Theta(e^{it})^*\Theta(e^{it})\eta,\zeta\right\rangle_{\cle} dt\\
&=\left\langle  \frac{1}{2\pi}\int_{0}^{2\pi} e^{-int} \Theta(e^{it})^*\Theta(e^{it})\eta  \ dt,\zeta\right\rangle_{\cle} \quad \text{(See \cite[Appendix\ A]{raul})}.
\end{align*}
Since it is true for all $\zeta \in \cle$, we have
\[
\frac{1}{2\pi}\int_{0}^{2\pi} e^{-int} \left(\Theta(e^{it})^*\Theta(e^{it})\eta\right)\, dt=0, \quad  (\forall\ \eta \in \cle, n \in \Z\setminus \{0\}).
\]
Thus $\Theta(e^{it})^*\Theta(e^{it})\eta$ is constant a.e.\ on $\T$, call it $\xi_{\eta}$. Then
\[
\langle \xi_\eta, \eta\rangle_{\cle}=\frac{1}{2\pi}\int_{0}^{2\pi}\left\langle \Theta(e^{it})^*\Theta(e^{it})\eta, \eta\right\rangle_{\cle} dt =\frac{1}{2\pi}\int_{0}^{2\pi} \|\Theta(e^{it})\eta\|_{\cle_*}^2 dt = \|\Theta \eta\|_{H^2_{\cle_*}(\D)}^2=\sum\limits_{m=1}^\infty \|\theta_m \eta\|_{\cle_*}^2.
\]
So we get $\Theta(e^{it})^*\Theta(e^{it})\eta=\xi_\eta=\sum\limits_{m=1}^\infty \theta_m^*\theta_m \eta.$ Note that this infinite series $\sum\limits_{m=1}^\infty \theta_m^*\theta_m$ is the strong operator limit of $\sum\limits_{m=1}^n \theta_m^*\theta_m$  as $ n \rightarrow \infty$ and hence a projection (see \cite[Chapter II, Problem 3.5]{conway}). Thus $\Theta(e^{it})^*\Theta(e^{it})$ is a constant projection a.e.\ on $\T$ and  so is $I -\Theta(e^{it})^*\Theta(e^{it})$. 
Therefore, 
\[
\Delta_{\Theta}(e^{it})=\Delta_{\Theta}(e^{it})^2 = \Delta_{\Theta}(e^{it})^{*}.
\]
For the sake of brevity, we write $\Delta_{\Theta}(e^{it})=\Delta_{\Theta}$ as it is a constant projection (independent of $t$). And for the same reason, 
\[
L_{\Delta_\Theta}|_{H^2_\cle(\D)}=T_{\Delta_\Theta}.
\] 
Set
\[
\clk=L^2_{\E_*} \oplus \Delta_{\Theta} L^2_\E, \quad\clk_+=H^2_{\E_*}(\D) \oplus \Delta_{\Theta} L^2_\E,
\]
and 
\[
\mathcal H=\clk_+\ominus \{\Theta h \oplus\Delta_{\Theta} h: h \in \he\}.
\]
Let $U$ denote the multiplication by $e^{it}$ on $\clk$. Then $U$ is unitary. 
Consider $U_+=U|_{\clk_+}$ and let $T$ be an operator on $\mathcal H$ defined by
\[
T^*=U_+^*|_\mathcal{H}.
\]
Thus
\[
T=P_\mathcal{H}U_+|_\mathcal{H},
\]
where $P_\mathcal H$ is the orthogonal projection of $\clk_+$ onto $\clh$. From 
Sz.-Nagy-Foia\c{s} construction, it is clear that $T$ is a c.n.u.\ contraction.

We first describe the model space $\mathcal{H}$ and the model operator $T$. 
Suppose $f \oplus \Delta_{\Theta} g \in \mathcal{H}$, where $f \in H^2_{\cle_*}(\D)$ and $g \in L^2_\cle$. Then for each $h \in \he$,
\begin{align*}
0= \langle f \oplus\Delta_{\Theta} g, \Theta h\oplus \Delta_{\Theta} h \rangle_\clh =  \langle T_\Theta^* f + P_+ \Delta_{\Theta} g,h \rangle_{H^2_{\cle}(\D)}.
\end{align*}
Therefore, $T_\Theta^* f + P_+ \Delta_{\Theta} g=0$, i.e., $P_+\Delta_{\Theta} g=-T_\Theta^*f$. Since $\Delta_{\Theta}$ is constant, 
\[
L_{\Delta_\Theta}P_+g =P_+ L_{\Delta_\Theta} g= P_+\Delta_{\Theta} g=-T_\Theta^*f. 
\]
It follows that $T_\Theta ^*f \in \clr(L_{\Delta_\Theta})$. Therefore,
\[
T_\Theta^*f =L_{\Delta_\Theta} T_\Theta^*f= T_{\Delta_{\Theta}}T_\Theta^*f= 0,
\]
where the last equality holds by the definition of  $\Delta_{\Theta}$. Again, $L_{\Delta_\Theta} P_+g=P_+\Delta_{\Theta} g=0$. Thus
\[
\clh=\left\{(I-T_\Theta T_\Theta^*)f\oplus\Delta_{\Theta} g: f \in H^2_{\E_*}(\D), g \in [\he]^\perp\right\}.
\]
Let $f=\sum\limits_{n=0}^\infty  a_nz^n \in H^2_{\E_*}(\D)$, and $g=\sum\limits_{n=1}^{\infty}  b_n\bar{z}^n\in [\he]^\perp$.
Now by Lemma \ref{Function-Lemma},
\[
(I-T_\Theta T_\Theta^*)f=a_0+\sum\limits_{n=1}^\infty \left(I-\sum\limits_{m=1}^n \theta_m\theta_m^*\right)a_nz^n.
\]
Let 
\[
h=(I -T_\Theta T_\Theta^*)f \oplus \Delta_{\Theta} g \in \clh. 
\]
Then
\begin{align*}
Th &=P_\mathcal{H}U_+\left(a_0+\sum\limits_{n=1}^\infty\left(I-\sum\limits_{m=1}^n \theta_m\theta_m^*\right)a_ne^{int}\oplus\sum\limits_{n=1}^{\infty} \Delta_{\Theta}b_ne^{-int}\right)\\
 &=P_\mathcal{H}\left(a_0e^{it}+\sum\limits_{n=1}^\infty \left(I-\sum\limits_{m=1}^n \theta_m\theta_m^*\right)a_ne^{i(n+1)t}\oplus\sum\limits_{n=1}^{\infty} \Delta_{\Theta}b_ne^{i(-n+1)t}\right)\\
&=\sum\limits_{n=0}^\infty \left(I- \sum\limits_{m=1}^{n+1}\theta_m\theta_m^*\right)a_ne^{i(n+1)t}\oplus\sum\limits_{n=2}^{\infty}\Delta_{\Theta}b_ne^{i(-n+1)t}.
\end{align*}
Also, for $p \geq 1$, we obtain
\begin{align*}
T^{*p}T^pT^{*p}h &=T^{*p}T^p\left(\sum\limits_{n=p}^\infty \left(I-\sum\limits_{m=1}^n \theta_m\theta_m^*\right)a_ne^{i(n-p)t}\oplus\sum\limits_{n=1}^{\infty}\Delta_{\Theta}b_ne^{i(-n-p)t}\right)\\
&=T^{*p}\left(\sum\limits_{n=p}^\infty\left(I-\sum\limits_{m=1}^n \theta_m\theta_m^*\right)a_ne^{int}\oplus\sum\limits_{n=1}^\infty\Delta_{\Theta}b_ne^{-int}\right)\\
&= \sum\limits_{n=p}^\infty \left(I-\sum\limits_{m=1}^n \theta_m\theta_m^*\right)a_ne^{i(n-p)t}\oplus\sum\limits_{n=1}^{\infty}\Delta_{\Theta}b_ne^{i(-n-p)t}\\
&=T^{*p}((I-T_\Theta T_\Theta^*)f\oplus\Delta_{\Theta} g)=T^{*p}h.
\end{align*}
Therefore, $T$ is a power partial isometry on $\clh$.

Our remaining task is to show that the characteristic function of $T$ coincides with $\Theta$. In order to do that, first we have to find defect spaces, namely, 
\[
\mathcal{D}_T=\clr (I-T^*T)=\cln(T) \quad \mbox{and} \quad \mathcal{D}_{T^*}=\clr (I-TT^*)=\cln (T^*).
\]
Now $h=(I-T_\Theta T_\Theta^*)f\oplus\Delta_{\Theta} g\in \cln(T)$ if and only if
\[
\sum\limits_{n=0}^\infty \left(I- \sum\limits_{m=1}^{n+1}\theta_m\theta_m^*\right)a_ne^{i(n+1)t}\oplus\sum\limits_{n=2}^{\infty}\Delta_{\Theta}  b_ne^{i(-n+1)t}=0.
\]
Equivalently,
\begin{align*}
\left(I-\sum\limits_{m=1}^{n+1}\theta_m\theta_m^*\right)a_n & =0\,\, ~~ \forall \,n \geq 0,\\
\mbox{and} \quad 
\Delta_{\Theta} b_n & = 0\,\, ~~ \forall \,n \geq 2.
\end{align*}
The former equality says that 
\begin{align*}
a_0 & = \theta_1\theta_1^*a_0 \\
\mbox{and} \quad \left(I-\sum\limits_{m=1}^n \theta_m\theta_m^*\right)a_n & = \theta_{n+1}\theta_{n+1}^*a_n \,\, ~~  \forall \, n \geq 1.
\end{align*}
Therefore,
\[
\mathcal{D}_T=\left\{\sum\limits_{n=0}^\infty \theta_{n+1}\theta_{n+1}^*a_nz^n \oplus\Delta_{\Theta} b\bar{z}: (a_n) \in \ell^2(\E_*), b \in \E\right\}.
\]
Now $h \in \cln(T^*)$ if and only if 
\[
\sum\limits_{n=1}^\infty\left(I-\sum\limits_{m=1}^n \theta_m\theta_m^*\right)a_ne^{i(n-1)t}\oplus\sum\limits_{n=1}^{\infty}\Delta_{\Theta}b_n e^{i(-n-1)t}=0,
\] 
i.e.,
\begin{align*}
\left(I-\sum\limits_{m=1}^n \theta_m\theta_m^*\right)a_n & =0\,\, ~~ \forall \,n \geq 1 \\
\mbox{and} \quad 
\Delta_{\Theta}b_n & =0\,\, ~~ \forall \,n \geq 1.
\end{align*}
Hence, $\mathcal{D}_{T^*}=\cle_*$.

We can now proceed to determine the characteristic function $\Theta_T:\D \rightarrow \clb(\mathcal{D}_T, \mathcal{D}_{T^*})$ of $T$.
Let 
\[
h=\sum\limits_{n=0}^\infty \theta_{n+1}\theta_{n+1}^*a_nz^n +\Delta_{\Theta}b\bar{z} \in \mathcal{D}_T.
\]
Then, for $z \in \D,$
\begin{align*}
\Theta_T( z)(h)&=\left(-T+ z(I-TT^*)(I- z T^*)^{-1}\right)\left(\sum\limits_{n=0}^\infty \theta_{n+1}\theta_{n+1}^*a_ne^{int} \oplus\Delta_{\Theta}be^{-it}\right)\\
&= z P_{\E_*}\left(\sum \limits_{j=0}^\infty  T^{*j}z^j\right)\left(\sum\limits_{n=0}^\infty \theta_{n+1}\theta_{n+1}^*a_ne^{int} \oplus\Delta_{\Theta}be^{-it}\right)\\
&=  z P_{\E_*}\left(\sum\limits_{j=0}^\infty  \left(\sum\limits_{n=j}^\infty \theta_{n+1}\theta_{n+1}^*a_ne^{i(n-j)t} \right) z^j   \oplus\sum\limits_{j=0}^\infty\Delta_{\Theta}be^{-i(j+1)t}z^j\right)\\
&= z\left(\sum\limits_{j=0}^\infty \theta_{j+1}\theta_{j+1}^*a_jz^j\right)\\
&= \sum\limits_{j=0}^\infty  \theta_{j+1}\theta_{j+1}^*a_jz^{j+1}.
\end{align*}

Now we will show that $\Theta_T$ coincides with $\Theta$. Define a map $\tau: \E \rightarrow \mathcal{D}_T$  by 
\[
\tau(a)=\sum\limits_{n=0}^\infty \theta_{n+1}az^n\oplus\Delta_{\Theta}a\bar{z} \quad (a \in \cle).
\]
Then $\tau$ is a well-defined linear map. Also,
\begin{align*}
\|\tau(a)\|^2&=\sum\limits_{n=0}^\infty\|\theta_{n+1}a\|^2+\left\| \Delta_{\Theta}a\right\|^2\\
&=\sum\limits_{n=0}^\infty\|\theta_{n+1}a\|^2+ \|a\|^2- \sum\limits_{m=1}^\infty \|\theta_ma\|^2\\
&=\|a\|^2.
\end{align*}
Thus $\tau$ is an isometry. To prove $\tau$ is surjective as well,
let 
\[
h=\sum\limits_{n=0}^\infty \theta_{n+1}\theta^*_{n+1}a_nz^n \oplus\Delta_{\Theta}b\bar{z}  \in \cld_T,
\] 
where $(a_n) \in \ell^2(\E_*)$ and $b \in \cle.$ For $a= \sum\limits_{m=0}^\infty\theta^*_{m+1}a_m+\Delta_{\Theta}b \in \cle$, it is easy to check that 
$\tau(a)=h$. Hence $\tau$ is a unitary.

Also, let $\tau_*: \cld_{T^*} \rightarrow \cle_*$ be the identity map, that is, 
$\tau_*(a) = a$. Now, for $ a \in \cle$ and $z \in \D$,
\begin{align*}
\tau_*\Theta_T( z)\tau(a)&= \tau_*\Theta_T( z)\left(\sum\limits_{n=0}^\infty \theta_{n+1}az^n \oplus\Delta_{\Theta}a\bar{z}\right)\\
&=\tau_*\left(\sum\limits_{n=0}^\infty \theta_{n+1}a z^{n+1}\right)\\
&=\sum\limits_{n=1}^\infty \theta_{n}a z^{n}\\
&=\Theta( z)a.
\end{align*}
This finishes the proof.
\end{proof}

It is well known that the characteristic function is inner if and only if the corresponding contraction is pure (see \cite{sznagy}). The following corollary is in that direction, and we omit the proof because it follows a similar approach to the proof of the above Theorem \ref{mainthm}.

\begin{cor}
Let $\Theta: \D \rightarrow \clb(\cle,\cle_*)$ be a purely contractive analytic inner function such that 
\[
\Theta( z)=\sum\limits_{m=1}^\infty  \theta_mz^m \quad (z\in \D),
\]
where each $\theta_m \in \clb(\cle,\cle_*)$ is a partial isometry for $m \geq 1$. Then there exist a Hilbert space  
\[
\clh=\left\{(I-T_\Theta T_\Theta^*)f: f \in H^2_{\E_*}(\D) \right\}
\]
and a pure power partial isometry $T$ on $\clh$ defined by
\[
T^*u = e^{-it}\left(u(e^{it}) - u(0)\right)  \quad (u \in \clh)
\]
such that the characteristic function of $T$ coincides with $\Theta$.
\end{cor}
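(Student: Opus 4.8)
The plan is to specialize the construction of Theorem~\ref{mainthm} to the inner case, where the second defect coordinate collapses. The crucial observation is that inner-ness forces $\Delta_\Theta=0$: since $\Theta$ is inner, $\Theta(e^{it})$ is an isometry a.e.\ on $\T$, so $\Theta(e^{it})^*\Theta(e^{it})=I_\cle$ a.e., whence $\Delta_\Theta(e^{it})=[I-\Theta(e^{it})^*\Theta(e^{it})]^{1/2}=0$. Consequently one has $\clk=L^2_{\cle_*}$, $\clk_+=H^2_{\cle_*}(\D)$, and $\clh=H^2_{\cle_*}(\D)\ominus\{\Theta h:h\in\he\}$. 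First I would identify this space with the stated form. Because $\Theta$ is inner, $T_\Theta$ is an isometry from $\he$ into $H^2_{\cle_*}(\D)$, so $T_\Theta T_\Theta^*$ is the orthogonal projection onto $\Theta\he$; hence $I-T_\Theta T_\Theta^*$ projects onto $H^2_{\cle_*}(\D)\ominus\Theta\he$, and its range over all $f\in H^2_{\cle_*}(\D)$ is exactly $\clh$.

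Next I would record the model operator. Since $\Theta\he$ is invariant under multiplication by $e^{it}$, its orthogonal complement $\clh$ is invariant under $U_+^*$, so $T^*=U_+^*|_\clh$ is the restriction of the backward shift and acts by $T^*u=e^{-it}(u(e^{it})-u(0))$, as claimed. That $T$ is a power partial isometry follows by repeating the computation of Theorem~\ref{mainthm} verbatim with the $\Delta_\Theta$-summand set to zero: the identity $T^{*p}T^pT^{*p}h=T^{*p}h$ survives intact.

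The purity of $T$ is the one genuinely new ingredient, and the step I expect to require the most attention. The cleanest route is to invoke the fact, recalled just before the statement, that the characteristic function is inner precisely when the contraction is pure; once $\Theta_T$ is shown to coincide with the inner $\Theta$, purity is immediate. Alternatively, and more self-containedly, $T^{*p}=U_+^{*p}|_\clh$ is the restriction of the $p$-fold backward shift on $H^2_{\cle_*}(\D)$, which satisfies $\|U_+^{*p}f\|\to 0$ for every $f$; hence $\|T^{*p}h\|\to 0$ for all $h\in\clh$, so $T$ is pure.

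Finally, the coincidence $\Theta_T\simeq\Theta$ is established exactly as in Theorem~\ref{mainthm}, with $\Delta_\Theta=0$ throughout. One finds $\mathcal{D}_{T^*}=\cle_*$ and
\[
\mathcal{D}_T=\left\{\sum_{n=0}^\infty\theta_{n+1}\theta_{n+1}^*a_nz^n:(a_n)\in\ell^2(\cle_*)\right\},
\]
and the same calculation gives $\Theta_T(z)h=\sum_{j\ge 0}\theta_{j+1}\theta_{j+1}^*a_jz^{j+1}$. The map $\tau:\cle\to\mathcal{D}_T$, $\tau(a)=\sum_{n=0}^\infty\theta_{n+1}az^n$, is isometric because $\|\tau(a)\|^2=\sum_{m=1}^\infty\|\theta_m a\|^2=\langle\sum_m\theta_m^*\theta_m\,a,a\rangle=\|a\|^2$, where now $\sum_m\theta_m^*\theta_m=I_\cle$ by inner-ness; this is the only place the hypothesis enters the intertwining, and it is precisely the point needing care, since it is what keeps $\tau$ unitary once the $\Delta_\Theta b\bar z$ term is gone. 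Taking $\tau_*=\mathrm{id}$ then yields $\tau_*\Theta_T(z)\tau=\Theta(z)$, completing the argument.
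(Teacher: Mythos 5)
Your proposal is correct and follows exactly the route the paper intends: the paper omits the proof of this corollary precisely because it is the specialization of Theorem \ref{mainthm} to the inner case, where $\Theta(e^{it})^*\Theta(e^{it})=I_\cle$ forces $\Delta_\Theta=0$ and collapses the second coordinate, which is what you carry out. Your two observations needing care --- that $T_\Theta T_\Theta^*$ is the projection onto $\Theta\he$ so the stated $\clh$ is the usual model space, and that $\sum_m\theta_m^*\theta_m=I_\cle$ is what keeps $\tau$ unitary without the $\Delta_\Theta b\bar z$ term --- are exactly the points where inner-ness replaces the general argument, and your self-contained purity argument via $T^{*p}=U_+^{*p}|_{\clh}$ is sound.
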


As an application of Theorem \ref{mainthm}, let us see the case of contractive analytic polynomial with partially isometric coefficients. In \cite{foias}, Foia\c{s} and Sarkar characterized the c.n.u.\ contractions with polynomial characteristic functions and proved that such operators have an upper triangular matricial representation of the form 
\[
\begin{pmatrix}
S & * & *\\
0 & N & *\\
0 & 0 & C
\end{pmatrix}
\]
where $S$, $N$, and $C$ are  unilateral shift, nilpotent and backward shift, respectively. In our case, since the corresponding c.n.u.\ contraction is a power partial isometry, we get a block diagonal representation and also we find the Halmos--Wallen decomposition spaces explicitly.

\begin{thm}{\label{spaces}}
In the setting of Theorem \ref{mainthm}, let $\Theta: \D \rightarrow \clb(\cle,\cle_*)$ be a  contractive analytic polynomial of degree $k$ such that 
\[
\Theta( z)=\sum\limits_{m=1}^k \theta_m z^m \quad (z \in \D),
\]
where each $\theta_m \in \clb(\cle,\cle_*)$ is a partial isometry for $m \geq 1$ and $T$ on the Hilbert space $\clh$ is the corresponding c.n.u.\ power  partial isometry. Then there exist $T$-reducing subspaces $\clh_s= \left(I-\sum\limits_{m=1}^k \theta_m \theta_m^*\right) H^2_{\cle_*}(\D)$, 
$\clh_b = \left(I-\sum\limits_{m=1}^k \theta_m^* \theta_m \right)[H^2_\cle(\D)]^\perp$ and $\clh_t=\clh \ominus (\clh_s \oplus \clh_b)$ such that $\clh=\clh_s \oplus \clh_t \oplus \clh_b$ and 
\[
T=
\begin{pmatrix}
S &0 &0\\
 0&N &0\\
0& 0& C
\end{pmatrix},
\]
where $S \in \clb(\clh_s)$ is a unilateral shift, $N \in \clb(\clh_t)$ is nilpotent of index $k$, and  $C \in \clb(\clh_b)$ is a backward shift.
\end{thm}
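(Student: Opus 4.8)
The plan is to read off everything from the explicit description of $\clh$ and the formulas for $T$ and $T^*$ established in the proof of Theorem \ref{mainthm}, together with the orthogonality relations $\theta_i^*\theta_j=0_\cle$ and $\theta_j\theta_i^*=0_{\cle_*}$ ($i\neq j$) of Lemma \ref{Main-Lemma}. Since $\Theta$ is a polynomial of degree $k$, we have $\theta_m=0$ for $m>k$; set $P=\sum_{m=1}^k\theta_m\theta_m^*$ and $Q=\sum_{m=1}^k\theta_m^*\theta_m$. By Lemma \ref{Main-Lemma} the final projections $\theta_m\theta_m^*$ (resp.\ the initial projections $\theta_m^*\theta_m$) have pairwise orthogonal ranges, so $P$ and $Q$ are themselves projections, and, as computed in the proof of Theorem \ref{mainthm}, $\Theta(e^{it})^*\Theta(e^{it})=Q$, whence $\Delta_\Theta=(I-Q)^{1/2}=I-Q$. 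With this notation $\clh_s$ embeds in $\clh$ as $(I-P)H^2_{\cle_*}(\D)\oplus 0$ and $\clh_b$ as $0\oplus(I-Q)[\he]^\perp$.

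First I would check that $\clh_s$ and $\clh_b$ reduce $T$ and carry the claimed operators. For $\phi\in(I-P)H^2_{\cle_*}(\D)$ the relation $\theta_m\theta_m^*(I-P)=0$ gives $(I-T_\Theta T_\Theta^*)\phi=\phi$, so $\phi\oplus 0\in\clh$; the formula for $T$ then reduces to multiplication by $z$ and the formula for $T^*$ to the backward shift, so $\clh_s$ reduces $T$ and $S=T|_{\clh_s}$ is a unilateral shift. Dually, on $0\oplus(I-Q)[\he]^\perp$ the same two formulas exhibit $T|_{\clh_b}$ as a backward shift and show that $\clh_b$ reduces $T$. Since $\clh_s$ and $\clh_b$ lie in the orthogonal summands $L^2_{\cle_*}$ and $\Delta_\Theta L^2_\cle$ of $\clk$, they are orthogonal and $\clh_s\oplus\clh_b\subseteq\clh$; consequently $\clh_t:=\clh\ominus(\clh_s\oplus\clh_b)$ is automatically reducing for $T$, and $T=\mathrm{diag}(S,N,C)$ with $N=T|_{\clh_t}$.

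The substantive step is to compute $\clh_t$. Writing $h=u\oplus v\in\clh$, orthogonality to $\clh_b$ forces $v=0$ (since $v$ already lies in the second component $(I-Q)[\he]^\perp$), while orthogonality to $\clh_s$ forces $(I-P)u=0$. Combining $(I-P)u=0$ with the fact that the $z^n$-coefficient of $u=(I-T_\Theta T_\Theta^*)f$ lies in $\clr(I-\sum_{m=1}^{\min(n,k)}\theta_m\theta_m^*)$ (Lemma \ref{Function-Lemma}) and intersecting these range conditions via the orthogonality of the ranges $\clr(\theta_m\theta_m^*)$, one finds
\[
\clh_t=\Big\{\sum_{n=0}^{k-1}u_nz^n\oplus 0:\ u_n\in\bigoplus_{m=n+1}^k\clr(\theta_m\theta_m^*)\Big\}.
\]
Regrouping by the index $m$, the summand $\clr(\theta_m\theta_m^*)$ occurs precisely in degrees $0\le n\le m-1$, so $\clh_t\cong\bigoplus_{m=1}^k\big(\clr(\theta_m\theta_m^*)\otimes\C^m\big)$; and because $\theta_m\theta_m^*$ acts as the identity on $\clr(\theta_m\theta_m^*)$ and annihilates $\clr(\theta_l\theta_l^*)$ for $l\neq m$, the formula for $T$ sends $u_nz^n\mapsto u_nz^{n+1}$ for $n\le m-2$ and $u_{m-1}z^{m-1}\mapsto 0$ on each summand. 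Thus $N=\bigoplus_{m=1}^k J_m$, a direct sum of truncated shifts, so $N^k=0$; and since $\theta_k\neq 0$ gives $\clr(\theta_k\theta_k^*)\neq 0$, the summand $J_k$ shows $N^{k-1}\neq 0$, so $N$ is nilpotent of index exactly $k$.

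The main obstacle I anticipate is the bookkeeping in the explicit determination of $\clh_t$: one must intersect, degree by degree, the membership condition for $\clr(I-T_\Theta T_\Theta^*)$ with the condition $(I-P)u=0$, and then correctly recognize the resulting graded space as the orthogonal direct sum of truncated-shift modules $\clr(\theta_m\theta_m^*)\otimes\C^m$. Everything else—the reducing property of $\clh_s$ and $\clh_b$ and the identification of $S$ and $C$—is a routine application of the action formulas from Theorem \ref{mainthm} and the orthogonality relations of Lemma \ref{Main-Lemma}.
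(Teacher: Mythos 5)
Your proposal is correct and follows essentially the same route as the paper: you identify $\clh_s$ and $\clh_b$ as the ranges of the complementary projections $I-\sum_{m=1}^k\theta_m\theta_m^*$ and $I-\sum_{m=1}^k\theta_m^*\theta_m$, verify directly from the action formulas of Theorem \ref{mainthm} and the orthogonality relations of Lemma \ref{Main-Lemma} that they reduce $T$ and carry a unilateral and a backward shift, and then compute $\clh_t$ degree by degree to arrive at the same graded description $\sum_{n=0}^{k-1}\bigl(\sum_{m=n+1}^{k}\theta_m\theta_m^*\bigr)a_nz^n$. Your regrouping of $\clh_t$ into the truncated-shift summands $\clr(\theta_m\theta_m^*)\otimes\C^m$, together with the observation that $\theta_k\neq 0$ makes the nilpotency index exactly $k$, reproduces (and slightly sharpens) the content of the remark the paper places after the theorem.
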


\begin{proof}
From Theorem \ref{mainthm}, we get
\[
\clh=\{(I-T_\Theta T_\Theta^*)f \oplus\Delta_{\Theta} g: f \in H^2_{\cle_*}(\D), g \in [H^2_\cle(\D)]^\perp\}.
\]
Lemma \ref{Main-Lemma} infers that $\theta_j^*\theta_i=0_{\cle}$ and $\theta_i \theta_j^*=0_{\cle_*}$ for $i \neq j$. Moreover, 
\[
\Delta_{\Theta} = I-\sum\limits_{m=1}^k \theta_m^* \theta_m \quad \mbox{and} \quad \Delta_{\Theta^*}= I-\sum\limits_{m=1}^k \theta_m \theta_m^*
\]
are orthogonal projections. Set
\[
\clh_b=\Delta_{\Theta} [H^2_\cle(\D)]^\perp.
\]
Then it is trivial to check that $\clh_b$ is invariant under $T^*$. Also, as in proof of Theorem \ref{mainthm}, for $g=\sum\limits_{n=1}^{\infty} b_n\bar{z}^n \in [\he]^\perp$, 
\[
T\left(\Delta_{\Theta} g\right)=\Delta_{\Theta}\left(\sum\limits_{n=2}^{\infty} b_ne^{i(-n+1)t}\right) \in \Delta_{\Theta} [H^2_\cle(\D)]^\perp.
\]
Hence $\clh_b$ reduces $T$. 
Now consider 
\begin{align*}
TT^*\left(\Delta_{\Theta} g\right)&=T\left(\sum\limits_{n=1}^{\infty}\Delta_{\Theta}  b_ne^{i(-n-1)t}\right)\\
&=\Delta_{\Theta}\left(\sum\limits_{n=1}^{\infty}b_ne^{-int}\right)= \Delta_{\Theta} g.
\end{align*}
Also note that 
\[
\left\Vert T^p(\Delta_\Theta g)\right\Vert^2=\left\Vert\sum\limits_{n=p+1}^{\infty}\Delta_\Theta b_ne^{i(-n+p)t}\right\Vert^2  \leq  \sum\limits_{n=p+1}^{\infty}\left\Vert b_n\right\Vert^2 \rightarrow 0
\]
as $p \rightarrow \infty$. Hence $T|_{\clh_b}$ is a backward shift.
Define 
\[
\clm=\clh \ominus \clh_b =(I-T_\Theta T_\Theta^*)H^2_{\cle_*}(\D).
\]
Now we define another space $\clh_s$ as
\[
\clh_s=\Delta_{\Theta^*} H^2_{\cle_*}(\D).
\]
Note that $\clh_s$ is a subspace of $\clm$ and it can be proved by using the following fact:
\[
\left(I-\sum\limits_{l=1}^n \theta_l \theta_l^*\right)\left( I-\sum\limits_{m=1}^k \theta_m \theta_m^*\right)=\left( I-\sum\limits_{m=1}^k \theta_m \theta_m^* \right)\,\, \forall \,\, n\leq k.
\]
For $f=\sum\limits_{n=0}^\infty a_nz^n \in H^2_{\cle_*}(\D)$, using Lemma \ref{Function-Lemma},
\begin{align*}
T(\Delta_{\Theta^*} f)&=\sum\limits_{n=0}^{k-1} \left( I-\sum\limits_{m=1}^{n+1} \theta_m \theta_m^* \right)\Delta_{\Theta^*}a_ne^{i(n+1)t}+\sum\limits_{n=k}^\infty\Delta_{\Theta^*} a_ne^{i(n+1)t}\\
&=\Delta_{\Theta^*}\left(\sum\limits_{n=0}^\infty a_n e^{i(n+1)t}\right) \in \Delta_{\Theta^*} H^2_{\cle_*}(\D).
\end{align*}
And $T^*\clh_s \subseteq \clh_s$ is trivial to prove. Hence, $\clh_s$ reduces $T$. Also,
\begin{align*}
T^*T(\Delta_{\Theta^*} f)&=T^*\left(\Delta_{\Theta^*} \left(\sum\limits_{n=0}^\infty a_ne^{i(n+1)t}\right)\right)\\
&= \Delta_{\Theta^*} \left(\sum\limits_{n=0}^\infty a_ne^{int}\right)=\Delta_{\Theta^*} f.
\end{align*}
Therefore, $T|_{\clh_s}$ is an isometry. Furthermore, it is pure as $T^{*p}=U_+^{*p}|_{\clh}$ for all $p \geq 0$.
Finally, set $\clh_t=\clm \ominus \clh_s$. Let $\theta_0=0$ and $\theta_n =0$ for all $n > k$. Then $(I-T_\Theta T_\Theta^*)f  \in \clh_t$ if and only if
\begin{align*}
&\left \langle (I-T_\Theta T_\Theta^*)f, \Delta_{\Theta^*} f' \right \rangle =0\quad \forall \,\, f'\in H^2_{\cle_*}(\D)\\
\Leftrightarrow& \left\langle \sum\limits_{n=0}^\infty \left(I-\sum\limits_{m=0}^n \theta_m\theta_m^* \right)a_nz^n, \Delta_{\Theta^*} f' \right\rangle \quad \forall\,\, f'\in H^2_{\cle_*}(\D)\\
\Leftrightarrow & \left\langle \Delta_{\Theta^*}f,f'\right \rangle = 0\quad \forall \,\,f'\in H^2_{\cle_*}(\D).
\end{align*}
Hence we get $\Delta_{\Theta^*} f =0$, i.e., $\sum\limits_{m=0}^k \theta_m\theta_m^*a_n=a_n$ for all $n \geq 0$.

Therefore, 
\begin{align*}
(I-T_\Theta T_\Theta^*) f&=\sum\limits_{n=0}^{k-1}\left(I-\sum\limits_{m=0}^n \theta_m\theta_m^*\right)\left(\sum\limits_{m=0}^k \theta_m\theta_m^*\right)a_nz^n+\sum\limits_{n=k}^\infty\left(I-\sum\limits_{m=0}^k \theta_m\theta_m^*\right)\left(\sum\limits_{m=0}^k \theta_m\theta_m^*\right)a_nz^n \\
&= \sum\limits_{n=0}^{k-1}\left(\sum\limits_{m=0}^k \theta_m\theta_m^*-\sum\limits_{m=0}^n \theta_m\theta_m^*\right)a_nz^n\\
&=\sum\limits_{n=0}^{k-1} \left(\sum\limits_{m=n+1}^k\theta_m\theta_m^*\right)a_nz^n.
\end{align*}
So, we get 
\[
\clh_t=\left\{\sum\limits_{n=0}^{k-1}\left(\sum\limits_{m=n+1}^k\theta_m\theta_m^*\right)a_nz^n: a_n \in \cle_*, 1\leq n \leq k-1\right\}.
\]
Since $\clh_t=\clh \ominus (\clh_b \oplus \clh_s)$, it is $T$-reducing.
Moreover, for $(I-T_\Theta T_\Theta^*)f \in H_t$,
\begin{align*}
T((I-T_\Theta T_\Theta^*)f)&=P_\clh\left(\sum\limits_{n=0}^{k-1}\left(\sum\limits_{m=n+1}^k \theta_m\theta_m^*\right)a_ne^{i(n+1)t} \right)\\
&= \sum\limits_{n=0}^{k-1}\left(I-\sum\limits_{m=0}^{n+1}\theta_m\theta_m^*\right)\left(\sum\limits_{m=n+1}^k \theta_m\theta_m^*\right)a_ne^{i(n+1)t}\\
&=  \sum\limits_{n=0}^{k-2}\left(\sum\limits_{m=n+2}^k\theta_m\theta_m^*\right)a_ne^{i(n+1)t}.
\end{align*}
Hence $T^k((I-T_\Theta T_\Theta^*)f)=0$, i.e., 
$T|_{\clh_t}$ is a nilpotent operator of index $k$. This completes the proof. 
\end{proof}

The above proof yields something more and there are few remarks in order:

\begin{rem}
For $m\in \{1,2,\ldots,k\}$, set 
\[
\clh_m=\left\{\sum\limits_{n=0}^{m-1}\theta_m\theta_m^* a_nz^n: a_n \in \cle_*, 0 \leq n \leq m-1\right\}.
\]
Then each $\clh_m$ reduces $T$. Indeed, for $g= \sum\limits_{n=0}^{m-1}\theta_m\theta_m^* a_nz^n\in \clh_m$,
\[
Tg=P_\clh\left(\sum\limits_{n=0}^{m-1}\theta_m\theta_m^* a_ne^{i(n+1)t}\right)=\sum\limits_{n=0}^{m-2}\theta_m\theta_m^* a_ne^{i(n+1)t} \in \clh_m.
\]
Also note that $\clh_m=\theta_m\theta_m^*\left(H^2_{\cle_*}(\D)\ominus z^m H^2_{\cle_*}(\D)\right)$. Thus $\clm$ is $T^*$-invariant. It is easy to check that $T|_{\clh_m}$ is a truncated shift of index $m$ for $m \in\{1,2,\ldots,k\}$ and 
\[
\clh_t =\bigoplus\limits_{m=1}^k \clh_m.
\]
\end{rem}

\begin{rem}
If $\Theta: \D \rightarrow \clb(\cle,\cle_*)$ is a contractive analytic inner function such that $\Theta( z)=\sum\limits_{m=0}^\infty \theta_mz^m$ with 
$\theta_i\theta_j^*=0_{\cle_*}$ for all $i \neq j$, then each $\theta_i$ is a partial isometry. Indeed, if $\Theta$ is inner, then $\Theta(e^{it})$ is an isometry from $\E$ to $\E_*$ a.e.\ on $\T$.
Thus, for $a \in \E$, 
\[
a=\Theta(e^{it})^*\Theta(e^{it})a=\sum\limits_{m=0}^\infty \theta_m^*\theta_m a.
\]
Pre-multiplying by $\theta_i$ yields $\theta_i \theta_i^*\theta_i a=\theta_i a\,\, ~~ \forall\,\, i$. Hence each $\theta_i$ is a partial isometry. One can compare this with Lemma \ref{Main-Lemma}.
\end{rem}

\section{Partially isometric Toeplitz operators}{\label{sec4}}

In this section we discuss about the partially isometric Toeplitz operators with operator-valued symbol and a complete characterization of such symbols has been given.

Let $T$ be a power partial isometry on $\clh$. Let $\Theta: \D\rightarrow \clb(\cld_T,\cld_{T^*})$ defined by,
\[
\Theta( z)=\sum\limits_{m=1}^\infty \theta_mz^m
\]
be the characteristic function of $T$. Then each $\theta_m$ is a partial isometry
by Theorem $\ref{mainthm}$. Now consider the Toeplitz operator $T_\Theta$ from $H^2_{\cld_T}(\D)$ to $H^2_{\cld_{T^*}}(\D)$ with operator-valued symbol $\Theta$. 
As noticed above, 
$\Theta(e^{it})^*\Theta(e^{it})$  is a  constant projection a.e.\ on $\T$. Therefore,
\[
T_\Theta T_\Theta^* T_\Theta=T_{\Theta \Theta^*\Theta}=T_\Theta
\]
which implies $T_\Theta$ is a partial isometry. The next natural question one can ask is:
{Characterize $\Gamma \in L^\infty_{\clb(\cld_T,\cld_{T^*})}$ such that, for
$\Phi =\Theta_T + \Gamma$, $T_\Phi$ is  a partial isometry.}

Recently, Sarkar (cf.\ \cite{ss}) characterized the partially isometric Toeplitz operators on $H^2_{\cle}(\D^n)$ for operator-valued symbols and also raised the question of characterizing partially isometric symbols $\Phi\in L^\infty_{\clb(\cle)}$ such that $T_\Phi$ is a partial isometry.
We have given here  some necessary conditions on such symbols  in one variable in Proposition \ref{NC} and a full characterization of such symbols for finite-dimensional underlying spaces. Before proceeding, we have also given a characterization of partially isometric Toeplitz operators $T_\Phi$ from $H^2_\cle(\D)$ to $H^2_{\cle_*}(\D)$ and the proof of this result is similar to that in \cite{ss}, which we had discovered independently without the knowledge of the cited article. For the sake of completion, we present a short proof.

\begin{thm}\label{SSthm}
Let $\Phi\in L^\infty_{\clb(\cle,\cle_*)}$ be nonzero. Then $T_\Phi$ is a  partially isometric Toeplitz operator from $H^2_\cle(\D)$ to $H^2_{\cle_*}(\D)$ if and only if there exist a Hilbert space $\clf$ and inner functions $\Theta(z)=\sum\limits_{m=0}^\infty \theta_mz^m \in H^\infty_{\clb(\clf,\cle_*)}$ and  $\Psi(z)=\sum\limits_{m=0}^\infty \psi_m z^m\in H^\infty_{\clb(\clf,\cle)}$ satisfying $\theta_m\psi_n^*=0$ for all $m,n \geq 1$ such that 
\[
T_\Phi=T_{\Theta}T_{\Psi}^*.
\]
\end{thm}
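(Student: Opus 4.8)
The statement is a factorization theorem for partially isometric Toeplitz operators, analogous to the classical Brown--Douglas result that a nonzero partially isometric (scalar) Toeplitz operator is $T_\phi$ or $T_\phi^*$ for inner $\phi$. The natural strategy is to split into the two implications and attack the harder forward direction via the Sz.-Nagy--Foia\c{s} inner-outer machinery for operator-valued analytic functions, which the paper already has at its disposal.

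**The easy direction.** First I would dispose of the ``if'' direction, which is a direct computation. Given inner $\Theta\in H^\infty_{\clb(\clf,\cle_*)}$ and $\Psi\in H^\infty_{\clb(\clf,\cle)}$ with $\theta_m\psi_n^*=0$ for all $m,n\geq 1$, set $T_\Phi=T_\Theta T_\Psi^*$. Since both factors are analytic, $T_\Theta$ and $T_\Psi$ are isometries (innerness gives $\Theta(e^{it})^*\Theta(e^{it})=I$ a.e., so $T_\Theta^*T_\Theta=I$, and likewise for $\Psi$). Then $T_\Phi T_\Phi^*T_\Phi=T_\Theta T_\Psi^* T_\Psi T_\Theta^* T_\Theta T_\Psi^*=T_\Theta T_\Psi^*=T_\Phi$, using $T_\Psi^*T_\Psi=I=T_\Theta^*T_\Theta$, which shows $T_\Phi$ is a partial isometry. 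The orthogonality condition $\theta_m\psi_n^*=0$ is what guarantees that $\Phi=\Theta\Psi^*$ actually has the correct Fourier structure to be a genuine symbol; I would verify that $T_{\Theta\Psi^*}=T_\Theta T_\Psi^*$ by checking the Fourier coefficients match, the cross terms vanishing precisely because of the stated orthogonality.

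**The hard direction.** For the ``only if'' direction, suppose $T_\Phi$ is a nonzero partial isometry. The key structural fact is the Toeplitz relation $(M_z^{\cle_*})^*T_\Phi M_z^\cle=T_\Phi$. The plan is to exploit $T_\Phi T_\Phi^* T_\Phi=T_\Phi$: the final projection $T_\Phi T_\Phi^*$ and initial projection $T_\Phi^* T_\Phi$ are then orthogonal projections, and I would show each of them intertwines with the shifts so that $\overline{\clr(T_\Phi)}$ is invariant under $M_z^{\cle_*}$ and $[\cln(T_\Phi)]^\perp$ is invariant under $M_z^\cle$. By the Beurling--Lax--Halmos theorem for vector-valued Hardy spaces, these shift-invariant subspaces are ranges of inner multipliers: $\overline{\clr(T_\Phi)}=\Theta H^2_\clf(\D)$ for some inner $\Theta\in H^\infty_{\clb(\clf,\cle_*)}$, and similarly $[\cln(T_\Phi)]^\perp=\Psi H^2_\clf(\D)$ for inner $\Psi\in H^\infty_{\clb(\clf,\cle)}$ on the same coefficient space $\clf$ (dimensions forced to agree since $T_\Phi$ restricts to a unitary from one subspace onto the other). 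One then writes $T_\Phi=T_\Theta V T_\Psi^*$ where $V$ is the induced unitary on $\clf$; absorbing $V$ into $\Psi$ lets me take $V=I$, giving $T_\Phi=T_\Theta T_\Psi^*$.

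**Main obstacle.** The principal difficulty is establishing the orthogonality relation $\theta_m\psi_n^*=0$ for $m,n\geq 1$ and, equally, verifying that the factorization produced by Beurling--Lax--Halmos is compatible with $T_\Phi$ arising from a single $L^\infty$ symbol rather than a general operator. The Toeplitz condition is the pivot here: I would feed the relation $(M_z^{\cle_*})^*T_\Phi M_z^\cle=T_\Phi$ through the factorization $T_\Phi=T_\Theta T_\Psi^*$ and compare Fourier coefficients to force the constant term structure, with the vanishing of the higher cross terms $\theta_m\psi_n^*$ being exactly the obstruction that must cancel for $\Theta\Psi^*$ to be a bounded symbol whose Toeplitz operator equals the product of the two analytic Toeplitz operators. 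Managing the coefficient space $\clf$ so that it is common to both inner functions, and checking that the unitary $V$ can indeed be absorbed, is the most delicate bookkeeping; since the paper remarks the proof parallels \cite{ss}, I would follow that route and keep this step brief.
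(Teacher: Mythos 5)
Your proposal is correct, and the ``if'' direction and the derivation of $\theta_m\psi_n^*=0$ from the Toeplitz relation $(M_z^{\cle_*})^*T_\Phi M_z^\cle=T_\Phi$ match the paper. The one genuine divergence is how you produce the second inner function $\Theta$. The paper applies Beurling--Lax--Halmos only once, to $[\cln(T_\Phi)]^\perp=\clr(T_\Phi^*)=T_\Psi H^2_\clf(\D)$, and then invokes Douglas' lemma to write $T_\Phi^*=T_\Psi X$ with $X=T_\Psi^*T_\Phi^*$; setting $\Theta=\Phi\Psi$ gives $X=T_\Theta^*$, hence $T_\Phi=T_\Theta T_\Psi^*$ immediately, and innerness of $\Theta$ follows from $T_\Theta^*T_\Theta=T_\Psi^*T_\Phi^*T_\Phi T_\Psi=I$. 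You instead apply Beurling--Lax--Halmos twice (to $\clr(T_\Phi)$, which is indeed $M_z^{\cle_*}$-invariant by running the paper's norm argument on the Toeplitz partial isometry $T_{\Phi^*}=T_\Phi^*$) and then must splice the two representations together via the unitary $W=T_\Theta^*T_\Phi T_\Psi$. That route works, but the step you flag as ``delicate bookkeeping'' does require an actual argument: to absorb $W$ you must show it intertwines the unilateral shifts on $H^2_{\clf_1}(\D)$ and $H^2_{\clf_2}(\D)$ (forcing it to be a constant unitary and the two coefficient spaces to have equal dimension). This reduces to showing $T_\Phi M_z^\cle f=M_z^{\cle_*}T_\Phi f$ for $f\in[\cln(T_\Phi)]^\perp$, which follows from the equality chain $\|(M_z^{\cle_*})^*T_\Phi M_z^\cle f\|=\|T_\Phi M_z^\cle f\|$ already established, since a vector whose norm is preserved by $(M_z^{\cle_*})^*$ lies in $z H^2_{\cle_*}(\D)$. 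The paper's Douglas-lemma route buys you exactly the avoidance of this verification and of any matching of coefficient spaces (the same $\clf$ appears in both factors by construction); your route buys a more symmetric picture of $T_\Phi$ as a unitary between two shift-invariant subspaces, at the cost of the intertwining check. Both are complete once that check is supplied.
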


\begin{proof}
Suppose that $T_\Phi: H^2_\cle(\D) \rightarrow H^2_{\cle_*}(\D)$ is a nonzero partially isometric Toeplitz operator. Then
\[
\clr(T_\Phi^*)=[\cln(T_\Phi)]^\perp=\{f \in H^2_{\cle}(\D):  \|T_\Phi f \|=\|f\|\}.
\]
For $f \in \clr(T_\Phi^*)$,  observe that
\[
	\|M_z^\cle f\|=\|f\|= \|T_\Phi f\|=\left\|(M_z^{\cle_*})^* T_\Phi M_z^\cle f\right\| \leq \|T_\Phi M_z^\cle f\| \leq \|M_z^\cle f\|. 
\]
Therefore, 
\[
\|T_\Phi M_z^\cle f\|=\|M_z^\cle f\|, \quad \text{i.e.}, \quad M_z^\cle f \in [\cln(T_\Phi)]^\perp.
\]
It follows that $[\cln(T_\Phi)]^\perp$ is $M_z$-invariant. Hence the Beurling-Lax-Halmos theorem yields that there exist a Hilbert space $\clf$ and an inner function $\Psi \in H^\infty_{\clb(\clf,\cle)}$ such that
\begin{equation}\label{BLH}
\clr(T_\Phi^*)= T_\Psi H^2_\clf(\D).
\end{equation}
Now, by Douglas' lemma \cite{RGD}, there exist a contraction $X : H^2_{\cle_*}(\D) \rightarrow H^2_{\clf}(\D)$ such that 
\[
T_\Phi^*=T_\Psi X.
\]
This implies that $X =T_\Psi^*T_\Phi^*$ as $T_\Psi$ is an isometry. Set 
$\Theta=\Phi \Psi$. Then $X= T_\Theta^*$. Therefore,
\[
T_\Phi=T_\Theta T_\Psi^*.
\]
Note that
\[
T_\Theta^*T_\Theta=T_\Psi^* T_\Phi^* T_\Phi T_\Psi= T_\Psi^* T_\Psi=I,
\]
where the second last equality holds from (\ref{BLH}) and the fact that $T_\Phi$ is a partial isometry. Thus, $\Theta\in H^\infty_{\clb(\clf, \cle_*)}$ is an inner function. 
Let 
\[
\Theta(z)=\sum\limits_{m=0}^\infty \theta_mz^m  \text{ and } \Psi(z)=\sum\limits_{m=0}^\infty \psi_mz^m
\]
where $\theta_m \in \clb(\clf,\cle_*)$ and $\psi_m \in \clb(\clf,\cle)$ for all $m \geq0$. Since $T_\Phi= T_\Theta T_\Psi^*$ is a Toeplitz operator, we have
\[
(M_z^{\cle_*})^*T_\Theta T_\Psi^* M_z^\cle=T_\Theta T_\Psi^*.
\]
Let $\eta \in \cle$ and $n \geq 0$. Consider
\begin{align*}
(M_z^{\cle_*})^*T_\Theta T_\Psi^* M_z^\cle(\eta z^n)&= (M_z^{\cle_*})^*T_\Theta\left(\sum\limits_{m=0}^{n+1} \psi_m^*\eta z^{n-m+1} \right )\\
&= (M_z^{\cle_*})^*T_\Theta\left(\sum\limits_{m=0}^n \psi_m^*\eta z^{n-m+1}\right) + (M_z^{\cle_*})^*T_\Theta(\psi_{n+1}^*\eta)\\
&=(M_z^{\cle_*})^*M_z^{\cle_*} T_\Theta\left(\sum\limits_{m=0}^n \psi_m^*\eta z^{n-m}\right) + (M_z^{\cle_*})^*T_\Theta(\psi_{n+1}^*\eta)\\
&=T_\Theta T_\Psi^*(\eta z^n) +(M_z^{\cle_*})^*T_\Theta(\psi_{n+1}^*\eta).
\end{align*}
It follows that 
\[
(M_z^{\cle_*})^*T_\Theta(\psi_{n+1} \eta)=\sum\limits_{m=1}^\infty \theta_m \psi_{n+1}^* \eta z^{m-1}=0.
\]
Since $\eta \in \cle$ and $n$ are arbitrary, we obtain $\theta_m \psi_n^*=0$ for all $m,n \geq 1$. 

The converse also holds, which is trivial to prove.
\end{proof}

Let us first see an example for this result:

\begin{ex}
Let $\Theta, \Psi \in H^\infty _{\clb(\C^3)}$ defined by
\[
\Theta(e^{it})=\begin{pmatrix}
e^{it} & 0 &0\\
0 &1 &0\\
0 &0 &e^{it}
\end{pmatrix}
\text{ and }
\Psi(e^{it})=\begin{pmatrix}
0 &0 & 1\\
0 & e^{it} &0\\
1 & 0 &0
\end{pmatrix}.
\]
Then $\Theta$ and $\Psi$ are inner functions and observe that $\theta_1 \psi_1^*=0$. Also,
\[T_\Phi=T_\Theta T_\Psi^*=\begin{pmatrix}
0 & 0 & T_{e^{it}}\\
0 & T_{e^{-it}} &0\\
T_{e^{it}} & 0 &0
\end{pmatrix},
\]
is a partial isometry.
\end{ex}

In order to answer the stated question, first we prove some necessary conditions on $\Phi \in L^\infty_{\clb(\cle,\cle_*)}$ for which $T_\Phi$ is a partial isometry.

\begin{rem}
Define a set 
\[
\bar{H}^\infty_{\clb(\cle,\cle_*)}: = \{\Psi(\bar{z}): \Psi \in H^\infty_{\clb(\cle,\cle_*)}\}.
\]
Then $\bar{H}^\infty_{\clb(\cle,\cle_*)} \subset L^\infty_{\clb(\cle,\cle_*)}$ and hence ${H}^\infty_{\clb(\cle,\cle_*)}+\bar{H}^\infty_{\clb(\cle,\cle_*)} \subset L^\infty_{\clb(\cle,\cle_*)}$. Consider 
\[
\cla : = {H}^\infty_{\clb(\cle,\cle_*)}+\bar{H}^\infty_{\clb(\cle,\cle_*)}.
\]
For $\Phi \in \cla$, in the sequel, we write $\Phi_+(z)=\sum\limits_{m=0}^\infty \phi_mz^m$ and $\Phi_-(z)=\sum\limits_{m=0}^\infty \phi_{-m}\bar{z}^m$ as the analytic and co-analytic part of $\Phi$, where $\phi_n =\widehat{\Phi}(n)\in \clb(\cle,\cle_*)$ for $n \in \Z$.

\end{rem}

\begin{lem}\label{NC1}
Let $\Phi \in L^\infty_{\clb(\cle,\cle_*)}$ be such that $T_\Phi: H^2_\cle(\D) \rightarrow H^2_{\cle_*}(\D)$ is a nonzero partially isometric Toeplitz operator. Then $\Phi(e^{it})$ is a partial isometry a.e.\ on $\T$ and $\Phi \in \cla.$
\end{lem}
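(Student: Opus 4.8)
The plan is to reduce everything to the factorization furnished by Theorem \ref{SSthm} and then to identify the symbol $\Phi$ explicitly. Since $T_\Phi$ is a nonzero partially isometric Toeplitz operator, that theorem produces a Hilbert space $\clf$ and inner functions $\Theta(z)=\sum_{m\geq 0}\theta_m z^m \in H^\infty_{\clb(\clf,\cle_*)}$ and $\Psi(z)=\sum_{m\geq 0}\psi_m z^m \in H^\infty_{\clb(\clf,\cle)}$ with $T_\Phi = T_\Theta T_\Psi^*$ and the crucial orthogonality $\theta_m\psi_n^*=0$ for all $m,n\geq 1$. My first goal is to prove that the operator product $T_\Theta T_\Psi^*$ is itself the Toeplitz operator with bounded symbol $\Theta(e^{it})\Psi(e^{it})^*$; granting this, uniqueness of the symbol of a Toeplitz operator forces $\Phi=\Theta(e^{it})\Psi(e^{it})^*$ a.e.\ on $\T$, and both assertions of the lemma then fall out from elementary pointwise computations.

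For the identification $T_\Theta T_\Psi^* = T_{\Theta\Psi^*}$, I would test against a generic $g\in H^2_\cle(\D)$. Because $\Theta$ and $\Psi$ are analytic, $T_\Theta T_\Psi^* g = \Theta\, P_+(\Psi^* g)$, whereas $T_{\Theta\Psi^*} g = P_+(\Theta\Psi^* g) = \Theta\, P_+(\Psi^* g) + P_+\big(\Theta\,(I-P_+)(\Psi^* g)\big)$, where $P_+$ is the analytic (Riesz) projection. Thus the two operators agree precisely when the ``mixed'' term $P_+\big(\Theta\,(I-P_+)(\Psi^* g)\big)$ vanishes. Expanding $(I-P_+)(\Psi^* g)$ into its strictly negative Fourier modes and multiplying by the analytic series $\Theta=\sum_{m\geq 0}\theta_m e^{imt}$, every coefficient of a nonnegative frequency that survives the projection $P_+$ is a finite sum of products $\theta_m\psi_\ell^*$ with $m\geq 1$ and $\ell\geq 1$, all of which vanish by the orthogonality relation. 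Hence the mixed term is $0$ and the identification holds. I expect this to be the main obstacle: it is exactly the hypothesis $\theta_m\psi_n^*=0$ $(m,n\geq 1)$ that converts a product of two Toeplitz operators—which in general is not Toeplitz—back into a single Toeplitz operator carrying the anticipated symbol.

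With $\Phi=\Theta\Psi^*$ a.e.\ secured, both conclusions are immediate. For the pointwise partial isometry claim I compute $\Phi(e^{it})^*\Phi(e^{it}) = \Psi(e^{it})\Theta(e^{it})^*\Theta(e^{it})\Psi(e^{it})^* = \Psi(e^{it})\Psi(e^{it})^*$ a.e., using $\Theta(e^{it})^*\Theta(e^{it})=I$ since $\Theta$ is inner; as $\Psi$ is inner, $\Psi(e^{it})\Psi(e^{it})^*$ is an orthogonal projection a.e., so $\Phi(e^{it})$ is a partial isometry a.e. For membership in $\cla$ I expand $\Theta\Psi^* = \sum_{m,n\geq 0}\theta_m\psi_n^* e^{i(m-n)t}$ and invoke the orthogonality once more: every term with both $m,n\geq 1$ drops out, leaving only the analytic block $\sum_{m\geq 0}\theta_m\psi_0^* e^{imt} = \Theta(e^{it})\psi_0^* \in H^\infty_{\clb(\cle,\cle_*)}$ together with the co-analytic block $\sum_{n\geq 1}\theta_0\psi_n^* e^{-int} = \theta_0\big(\Psi(e^{it})^* - \psi_0^*\big) \in \bar{H}^\infty_{\clb(\cle,\cle_*)}$. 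Their sum is $\Phi$, whence $\Phi\in\cla$, completing the proof.
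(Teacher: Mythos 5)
Your proposal is correct and follows essentially the same route as the paper: both invoke Theorem \ref{SSthm} to factor $T_\Phi=T_\Theta T_\Psi^*$, identify $\Phi=\Theta\Psi^*$ a.e.\ on $\T$, deduce the pointwise partial-isometry property from innerness of $\Theta$ and $\Psi$, and obtain $\Phi\in\cla$ by discarding the mixed terms via $\theta_m\psi_n^*=0$ for $m,n\geq 1$. The only difference is that you explicitly verify $T_\Theta T_\Psi^*=T_{\Theta\Psi^*}$ by showing the cross term $P_+\bigl(\Theta\,(I-P_+)(\Psi^*g)\bigr)$ vanishes, whereas the paper simply cites the fact that a product of Toeplitz operators which is itself Toeplitz carries the product symbol; this is a harmless (indeed more self-contained) elaboration of the same step.
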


\begin{proof}
Suppose $\Phi \in L^\infty_{\clb(\cle,\cle_*)}$ such that $T_\Phi: H^2_\cle(\D) \rightarrow H^2_{\cle_*}(\D)$ is a nonzero partially isometric Toeplitz operator. Then Theorem~ \ref{SSthm} infers that there exist a Hilbert space 
	$\clf$ and inner functions $\Theta(z)=\sum\limits_{m=0}^\infty \theta_mz^m\in H^\infty_{\clb(\clf,\cle_*)}$, $\Psi(z)=\sum\limits_{m=0}^\infty \psi_m z^m\in H^\infty_{\clb(\clf,\cle)}$ satisfying $\theta_m\psi_n^*=0$ for all $m,n \geq 1$ such that 
	\[
	T_\Phi=T_{\Theta}T_{\Psi}^*.
	\]
	Since $T_\Theta T_\Psi^*$ is a Toeplitz operator, $T_\Phi=T_{\Theta {\Psi}^*}$ and hence $\Phi(e^{it})=\Theta(e^{it})\Psi(e^{it})^*$ a.e.\ on $\T$. Now $\Phi(e^{it})$ is a partial isometry a.e.\ on $\T$ as
	\[
	\Phi(e^{it})\Phi(e^{it})^*\Phi(e^{it})= \Theta(e^{it})\Psi(e^{it})^* \Psi(e^{it}) \Theta(e^{it})^*\Theta(e^{it})\Psi(e^{it})^* = \Theta(e^{it})\Psi(e^{it})^* = \Phi(e^{it}).
	\]
	Since $\theta_n \psi_m^*=0$ for all $m,n\geq 1$, 
	\begin{equation*}
		T_\Phi=T_\Theta T_\Psi^*=\Theta(0)T_{\Psi}^*+ T_\Theta \Psi(0)^*-\Theta(0)\Psi(0)^*,
	\end{equation*}
	that is,
	\begin{equation}\label{eq 5}
		\Phi(e^{it})=\theta_0 \Psi(e^{it})^*+\Theta(e^{it}) \psi_0^*-\theta_0\psi_0^* \quad (\text{a.e.\ on }\T).
	\end{equation}
	Thus $\Phi \in \cla$ with $\Phi_+=\Theta\psi_0^*$ and $\Phi_-=\theta_0\Psi^*$.
\end{proof}

Therefore, by virtue of the previous theorem, while characterizing the symbol of a partially isometric Toeplitz operator, it suffices to  restrict our attention to $\Phi \in \cla$ rather than considering $\Phi \in L^\infty_{\clb(\cle,\cle_*)}$ .

\begin{prop}\label{NC}
Let $\Phi \in \cla$ be such that $T_\Phi: H^2_\cle(\D) \rightarrow H^2_{\cle_*}(\D)$ is a nonzero partially isometric Toeplitz operator. Then $\Phi$ satisfies the following conditions:
\begin{enumerate}
\item  $\Phi_+(e^{it})^*\Phi_+(e^{it}) $ and $\Phi_-(e^{it})\Phi_-(e^{it})^*$
 are  operator-valued constant functions a.e.\ on $\T$, where $\Phi_+ \in H^\infty_{\clb(\cle,\cle_*)}$ and $\Phi_-^* \in H^\infty_{\clb(\cle_*,\cle)}$ are contractive analytic functions. \label{con1} 
\item $\hat{\Phi}(m)^*\hat{\Phi}(-n)=0_\cle$ and $\hat{\Phi}(-n)\hat{\Phi}(m)^*=0_{\cle_*}$ for all $m,n \geq 1$.
\label{con2}
\end{enumerate}
\end{prop}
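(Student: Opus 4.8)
The plan is to reduce everything to the factorization produced by the preceding results and then to read off the two conditions by elementary Fourier-coefficient bookkeeping. Since $T_\Phi$ is a nonzero partially isometric Toeplitz operator, Theorem \ref{SSthm} together with Lemma \ref{NC1} supplies a Hilbert space $\clf$ and inner functions $\Theta(z)=\sum_{m\ge 0}\theta_m z^m\in H^\infty_{\clb(\clf,\cle_*)}$ and $\Psi(z)=\sum_{m\ge 0}\psi_m z^m\in H^\infty_{\clb(\clf,\cle)}$ with $\theta_j\psi_k^*=0$ for all $j,k\ge 1$, such that $\Phi=\Theta\Psi^*$ and, crucially, $\Phi_+=\Theta\psi_0^*$ and $\Phi_-=\theta_0\Psi^*$. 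From these identifications I would immediately read off the Fourier coefficients $\hat\Phi(m)=\theta_m\psi_0^*$ and $\hat\Phi(-n)=\theta_0\psi_n^*$ for $m,n\ge 1$, which is all that condition \eqref{con2} will need.

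For condition \eqref{con1} I would use only that $\Theta,\Psi$ are inner, i.e.\ $\Theta(e^{it})^*\Theta(e^{it})=I_\clf$ and $\Psi(e^{it})^*\Psi(e^{it})=I_\clf$ a.e.\ on $\T$. Then
\[
\Phi_+(e^{it})^*\Phi_+(e^{it})=\psi_0\,\Theta(e^{it})^*\Theta(e^{it})\,\psi_0^*=\psi_0\psi_0^*,
\]
\[
\Phi_-(e^{it})\Phi_-(e^{it})^*=\theta_0\,\Psi(e^{it})^*\Psi(e^{it})\,\theta_0^*=\theta_0\theta_0^*,
\]
both independent of $t$, giving the asserted constancy. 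That $\Phi_+=\Theta\psi_0^*\in H^\infty_{\clb(\cle,\cle_*)}$ and $\Phi_-^*=\Psi\theta_0^*\in H^\infty_{\clb(\cle_*,\cle)}$ are analytic is clear, and they are contractive because $\|\Theta(z)\|\le 1$ and $\|\Psi(z)\|\le 1$ on $\D$ (inner functions are contractive by the maximum modulus principle), while $\|\psi_0^*\|=\|\Psi(0)\|\le 1$ and $\|\theta_0^*\|=\|\Theta(0)\|\le 1$.

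The heart of the matter, and the step I expect to be the main obstacle, is condition \eqref{con2}: $\hat\Phi(m)^*\hat\Phi(-n)=0_\cle$ and $\hat\Phi(-n)\hat\Phi(m)^*=0_{\cle_*}$ for $m,n\ge 1$. With the coefficients above these read $\psi_0\,\theta_m^*\theta_0\,\psi_n^*=0$ and $\theta_0\,\psi_n^*\psi_0\,\theta_m^*=0$, so it suffices to prove $\theta_m^*\theta_0\,\psi_n^*=0$ and $\psi_n^*\psi_0\,\theta_m^*=0$. The idea is to combine inner-ness with the orthogonality $\theta_j\psi_k^*=0$. Comparing Fourier coefficients in $\Theta(e^{it})^*\Theta(e^{it})=I_\clf$ gives $\sum_{j}\theta_j^*\theta_{j+p}=\delta_{p0}I_\clf$ for every $p$; taking $p=-m$ and reindexing yields the key identity
\[
\theta_m^*\theta_0=-\sum_{i\ge 1}\theta_{m+i}^*\theta_i\qquad(m\ge 1),
\]
so $\theta_m^*\theta_0$ annihilates $\bigcap_{j\ge 1}\cln(\theta_j)$ (on such a vector every term on the right vanishes, so convergence is not even an issue). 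On the other hand $\theta_j\psi_n^*=0$ for $j\ge 1$ means $\clr(\psi_n^*)\subseteq\bigcap_{j\ge 1}\cln(\theta_j)$, whence $\theta_m^*\theta_0\psi_n^*=0$ and the first relation follows. The second relation would be proved by the symmetric computation with $\Psi$ in place of $\Theta$: inner-ness of $\Psi$ gives $\psi_n^*\psi_0=-\sum_{i\ge 1}\psi_{n+i}^*\psi_i$, while the adjoint relation $\psi_k\theta_m^*=0$ ($k,m\ge 1$) gives $\clr(\theta_m^*)\subseteq\bigcap_{k\ge 1}\cln(\psi_k)$, so $\psi_n^*\psi_0\theta_m^*=0$. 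The only delicate point is the manipulation of these (possibly infinite) coefficient series, which is harmless here because the relevant vectors lie in the common kernels that kill every summand.
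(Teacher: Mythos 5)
Your proof is correct. For condition (1) it is the same computation as the paper's: both extract $\Phi_+=\Theta\psi_0^*$ and $\Phi_-=\theta_0\Psi^*$ from the proof of Lemma \ref{NC1} and use inner-ness to get the constants $\psi_0\psi_0^*$ and $\theta_0\theta_0^*$. For condition (2), however, you take a genuinely different route. The paper argues directly with the operator: it forms $f=(T_\Phi M_z-M_zT_\Phi)(\eta z^{m_0-1})$, observes that $f$ is the constant $\phi_{-m_0}\eta$ and lies in $\clr(T_\Phi)$ (using $M_z$-invariance of $[\cln(T_\Phi^*)]^\perp$), and then exploits $\|P_+L_\Phi^*f\|=\|f\|=\|L_\Phi^*f\|$ to force $L_\Phi^*f\in H^2_\cle(\D)$, whence $\phi_m^*\phi_{-m_0}=0$; the adjoint relation comes from replacing $\Phi$ by $\Phi^*$. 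You instead stay inside the Beurling--Lax--Halmos factorization: you read off $\hat\Phi(m)=\theta_m\psi_0^*$, $\hat\Phi(-n)=\theta_0\psi_n^*$, derive $\theta_m^*\theta_0=-\sum_{i\ge1}\theta_{m+i}^*\theta_i$ from inner-ness of $\Theta$ (the series converging weakly by Cauchy--Schwarz, and trivially on vectors killed by every $\theta_i$, $i\ge1$), and combine this with $\clr(\psi_n^*)\subseteq\bigcap_{j\ge1}\cln(\theta_j)$ coming from $\theta_j\psi_n^*=0$; the symmetric computation with $\Psi$ gives the other identity. Your argument makes condition (2) a purely algebraic consequence of the factorization data $(\Theta,\Psi,\theta_j\psi_k^*=0)$, which is conceptually tidy; the paper's argument is more operator-geometric and, for part (2), does not actually need the factorization at all, only that $T_\Phi$ and $T_\Phi^*$ are partial isometries. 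Both are sound, and your handling of the convergence issue in the coefficient series is adequate as stated.
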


\begin{proof}
Suppose $\Phi \in \cla$ such that $T_\Phi: H^2_\cle(\D) \rightarrow H^2_{\cle_*}(\D)$ is a nonzero partial isometric Toeplitz operator.
Write
\begin{equation}\label{eq 6}
\Phi= \sum\limits_{m=1}^\infty  \phi_{-m}\bar{z}^m +\sum\limits_{m=0}^\infty \phi_mz^{m}.
\end{equation}

From the proof of Lemma \ref{NC1}, $\Phi_+=\Theta\Psi(0)^*$ and $\Phi_-=\Theta(0)\Psi^*$ for operator valued inner functions $\Theta$ and $\Psi$. Observe that
\[
(\Theta(e^{it}) \psi_0^*)^*(\Theta(e^{it}) \psi_0^*)=\psi_0\psi_0^* \,\, \quad(\text{a.e.\ on } \T).
\]
Thus we get $\Phi_+(e^{it})^*\Phi_+(e^{it}) $ is a constant positive operator a.e.\ on $\T$. Similarly,  $\Phi_-(e^{it})\Phi_-(e^{it})^*=(\theta_0 \Psi(e^{it})^*)(\theta_0 \Psi(e^{it})^*)^* = \theta_0 \theta_0^*$ is a constant positive operator a.e.\ on 
$\T$. 

Now let $\phi_{-m_0}$ be nonzero for some $m_0 \geq 1$. For $\eta \in \cle$, define a function $f\in H^2_{\cle_*}(\D)$ by 
\[
f(z)=(T_\Phi M_z-M_zT_\Phi)\left(\eta z^{m_0-1}\right).
\]
Then 
\begin{align*}
f(e^{it})&=T_\Phi(\eta e^{im_0t})-e^{it}T_\Phi(\eta e^{i(m_0-1)t})\\
&=P_+\left(\sum\limits_{m=1}^\infty \phi_{-m}(\eta)e^{i(-m+m_0)t}\right)+\sum\limits_{m=0}^\infty \phi_m(\eta)e^{i(m+m_0)t} \\
& \quad -e^{it}P_+\left(\sum\limits_{m=1}^\infty \phi_{-m}(\eta)e^{i(-m+m_0-1)t}\right) - e^{it}\sum\limits_{m=0}^\infty \phi_{m}(\eta)e^{i(m+m_0-1)t}\\
&=\sum\limits_{m=1}^{m_0} \phi_{-m}(\eta)e^{i(-m+m_0)t}-e^{it}\left(\sum\limits_{m=1}^{m_0-1}\phi_{-m}(\eta)e^{i(-m+m_0-1)t}\right)\\
&=\phi_{-m_0}(\eta).
\end{align*}
Since $T_\Phi$ is a partial isometry, $T_\Phi^*$  is also a partial isometry. 
Hence $[\cln(T_\Phi^*)]^\perp=\clr(T_\Phi)$ is $M_z$-invariant which yields $f \in \clr(T_\Phi)$.
Also note that 
\[
\|f\|=\|T_\Phi^*f\|=\|P_+L_\Phi ^*f\|\leq \|L_\Phi^* f \|\leq \|f\|.
\]
Therefore, $\|P_+L_\Phi ^*f \|=\|L_\Phi^* f \|$ which implies $P_+ L_\Phi^* (f)=L_\Phi^*f$. Hence $L_\Phi^* f \in H^2_\cle(\D)$. Then
\[
\sum\limits_{m=1}^\infty \phi_{-m}^*\phi_{-m_0}(\eta)e^{imt}+\sum\limits_{m=0}^\infty \phi_m^*\phi_{-m_0}(\eta)e^{-imt} = \Phi(e^{it})^*f(e^{it})\in H^2_\cle(\D)
\]
if and only if 
\[
\phi_m^*\phi_{-m_0}\eta=0 \quad \mbox{for all} ~~ m \geq 1.
\]
Since $m_0 \geq 1$ is arbitrary such that $\phi_{-m_0} \neq 0$, we have $\phi_m^*\phi_{-n}=0 $ for all $m,n \geq 1$. Note that if $\phi_{-m}=0$ for all $m \geq 1$, then this condition is trivially true. 

Again, since $T_\Phi^*$ is also a partial isometry, on replacing $\Phi$ by $\Phi^*$, we obtain $\phi_{-n}\phi_m^*=0$ for all $m ,n \geq 1$. This finishes the proof.
\end{proof}

Now it is a natural question to ask whether the converse of Proposition \ref{NC} holds. We give an affirmative answer in the case when $\cle$ and $\cle_*$ are finite-dimensional. Before that, a few general observations are nevertheless in order.
\begin{rem}\label{innerproduct}
For finite-dimensional Hilbert spaces $\cle$ and  $\cle_*$, $\clb(\cle,\cle_*)$ can be viewed as a Hilbert space (with Hilbert-Schmidt norm). And thus we may view $L^2_{\clb(\cle,\cle_*)}$ as a space of square summable Fourier series with coefficients in $\clb(\cle,\cle_*)$ with the following inner product (see \cite{GHKL}).
\[
\langle A, B\rangle=\int_{\T} tr(B^*A) d\mu=\sum\limits_{n=-\infty}^\infty tr(B_n^*A_n),
\]
where $A(e^{it})=\sum\limits_{n=-\infty}^\infty A_ne^{int}$, $B(e^{it})=\sum\limits_{n=-\infty}^\infty B_ne^{int} \in L^2_{\clb(\cle,\cle_*)}$ with $A_n, B_n \in \clb(\cle,\cle_*)$ for all $n$. In particular, $L^\infty_{\clb(\cle,\cle_*)} \subset L^2_{\clb(\cle,\cle_*)}.$ Note that since $\clb(\cle,\cle_*)$ is finite-dimensional, operator norm and Hilbert-Schmidt norm are equivalent. So, we continue to consider on $L^\infty _{\clb(\cle,\cle_*)}$ and $H^\infty_{\clb(\cle,\cle_*)}$, the equivalent norm obtained by operator norm.
\end{rem}

\begin{prop}\label{SC}
Let $\cle,\cle_*$ be finite-dimensional Hilbert spaces and  $\Phi\in \cla$ be a nonzero partial isometry a.e.\ on $\T$ which satisfies $(\ref{con1})$ and $(\ref{con2})$ of Proposition \ref{NC}. Then $T_\Phi$ is a partially isometric Toeplitz operator from $H^2_\cle(\D)$ to $ H^2_{\cle_*}(\D)$.
\end{prop}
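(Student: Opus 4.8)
The plan is to reduce the statement to the factorization already obtained in Theorem \ref{SSthm}. To show that $T_\Phi$ is a partial isometry it suffices, by the sufficiency (converse) direction of Theorem \ref{SSthm}, to produce a Hilbert space $\clf$ and inner functions $\Theta(z)=\sum_{m\geq0}\theta_m z^m\in H^\infty_{\clb(\clf,\cle_*)}$ and $\Psi(z)=\sum_{m\geq0}\psi_m z^m\in H^\infty_{\clb(\clf,\cle)}$ with $\theta_m\psi_n^*=0$ for all $m,n\geq1$ and $\Theta\Psi^*=\Phi$ a.e.\ on $\T$. Once this is in hand, $T_\Phi=T_\Theta T_\Psi^*$ is Toeplitz and, since $\Theta,\Psi$ are inner, $T_\Theta^*T_\Theta=I$ and $T_\Psi^*T_\Psi=I$, so that $T_\Phi T_\Phi^* T_\Phi=T_\Theta(T_\Psi^*T_\Psi)(T_\Theta^*T_\Theta)T_\Psi^*=T_\Theta T_\Psi^*=T_\Phi$, which is the desired conclusion.

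Write $\Phi=\Phi_++\Phi_-$ as in Proposition \ref{NC}, so that by condition (\ref{con1}) the operators $Q_+:=\Phi_+^*\Phi_+$ and $Q_-:=\Phi_-\Phi_-^*$ are constant positive contractions. First I would build the two one–sided factorizations. Since $Q_+$ has closed range (automatic in finite dimensions), the function $\Theta_0:=\Phi_+Q_+^{-1/2}$, defined on $\clr(Q_+)$, satisfies $\Theta_0^*\Theta_0=Q_+^{-1/2}\Phi_+^*\Phi_+Q_+^{-1/2}=I$ a.e., hence is inner, and gives $\Phi_+=\Theta_0\,Q_+^{1/2}$; applying the same reasoning to $\Phi_-^*\in H^\infty_{\clb(\cle_*,\cle)}$ produces an inner $\Psi_0$ with $\Phi_-=Q_-^{1/2}\Psi_0^*$. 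The orthogonality relations of condition (\ref{con2}) then guarantee, once the analytic channel and the co-analytic channel are placed in orthogonal summands of a common coefficient space $\clf$, that the higher coefficients automatically satisfy $\theta_m\psi_n^*=0$ for $m,n\geq1$; this is exactly the computation in the proof of Theorem \ref{SSthm}, read in reverse.

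The delicate point, and the step I expect to be the main obstacle, is the constant term $\phi_0=\hat\Phi(0)$, which sits simultaneously inside $\Phi_+$ and $\Phi_-$ and must be produced \emph{only once} by $\Theta\Psi^*$ (namely as its $m=n=0$ contribution). Matching constant terms forces $\theta_0\theta_0^*=Q_-$, $\psi_0\psi_0^*=Q_+$ and $\theta_0\psi_0^*=\phi_0$ over the \emph{same} $\clf$, i.e.\ the positive semidefiniteness of the operator block $\left(\begin{smallmatrix}Q_-&\phi_0\\ \phi_0^*&Q_+\end{smallmatrix}\right)$ on $\cle_*\oplus\cle$. This positivity is \emph{not} a consequence of contractivity alone: its Schur complement is $\phi_0(I-Q_+^{-1})\phi_0^*+\sum_{n\geq1}\phi_{-n}\phi_{-n}^*$, a difference of non‑comparable positive operators, so it is precisely here that the hypothesis that $\Phi(e^{it})$ is a partial isometry a.e., combined with (\ref{con2}), must be used to control how $\phi_0$ couples the two constant blocks, and finite‑dimensionality (via the Hilbert--Schmidt picture of Remark \ref{innerproduct} and the availability of square roots and pseudo‑inverses with closed range) to realise the joint factorization $\left(\begin{smallmatrix}\theta_0\\ \psi_0\end{smallmatrix}\right)$ on a finite‑dimensional $\clf$. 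With $\theta_0,\psi_0$ chosen compatibly I would then set $\theta_m:=\phi_m(\psi_0^*)^\dagger$ and $\psi_n:=\phi_{-n}^*(\theta_0^\dagger)^*$ on the relevant ranges, verify that $\Theta,\Psi$ are inner using the constancy in (\ref{con1}), and check $\Theta\Psi^*=\Phi$ coefficientwise.

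As an alternative that avoids inner‑function bookkeeping, one may instead verify directly that $T_\Phi^*T_\Phi$ is idempotent. Expanding $\langle T_\Phi(\eta z^k),\zeta z^l\rangle$ gives the block $(T_\Phi^*T_\Phi)_{l,k}=\sum_{j\ge0}\phi_{j-l}^*\phi_{j-k}$, where by (\ref{con2}) every product $\phi_a^*\phi_b$ with $a,b$ of strictly opposite sign vanishes, while (\ref{con1}) collapses the surviving analytic sums to $Q_+$. Establishing $\sum_r(T_\Phi^*T_\Phi)_{l,r}(T_\Phi^*T_\Phi)_{r,k}=(T_\Phi^*T_\Phi)_{l,k}$ then again reduces to controlling the $\phi_0$–terms, so the same obstacle reappears in arithmetic form; for this reason I expect the factorization route to be the cleaner of the two.
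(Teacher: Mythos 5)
Your write-up is a plan rather than a proof: both of your proposed routes stop exactly at the point where the real work begins, and you say so yourself (``the main obstacle'', ``the same obstacle reappears''). In the factorization route you correctly isolate the crux --- realizing $\theta_0,\psi_0$ on a common $\clf$ with $\theta_0\theta_0^*=Q_-$, $\psi_0\psi_0^*=Q_+$ and $\theta_0\psi_0^*=\phi_0$, equivalently the positivity of $\left(\begin{smallmatrix}Q_-&\phi_0\\ \phi_0^*&Q_+\end{smallmatrix}\right)$ --- but you never prove that positivity, and you correctly observe it does not follow from contractivity alone. You assert that the a.e.\ partial isometry hypothesis together with (\ref{con2}) ``must be used'' here, but you extract no actual consequence from that hypothesis. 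Moreover, even granting the constant blocks, your recipe $\theta_m:=\phi_m(\psi_0^*)^\dagger$, $\psi_n:=\phi_{-n}^*(\theta_0^\dagger)^*$ would still require verifying that the resulting $\Theta$ and $\Psi$ are \emph{inner}, i.e.\ that $\Theta(e^{it})^*\Theta(e^{it})=I_\clf$ a.e.\ including all cross terms $\sum_m\theta_m^*\theta_{m+k}$, which is not addressed. So neither route is completed.

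For comparison, the paper takes your second (direct) route and resolves the $\phi_0$-coupling by actually exploiting the pointwise relation $L_\Phi L_\Phi^*L_\Phi=L_\Phi$: writing $T=\Phi_+^*\Phi_+$ and $V=\Phi_-\Phi_-^*$ (constants by (\ref{con1})), it expands $L_\Phi L_\Phi^*L_\Phi\eta$ using (\ref{con2}) and then compares Fourier coefficients to derive the identities $V\phi_{-j}=\phi_{-j}$ and $\phi_jT=\phi_j$ for $j\ge 1$, $\phi_m\phi_0^*\phi_{-j}=0$ for $m,j\ge 1$, and $\Phi_-(T-\phi_0^*\phi_0)+V\phi_0=\phi_0$. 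These four identities are precisely what is needed to collapse the $\phi_0$-terms in the computation of $T_\Phi T_\Phi^*T_\Phi(\eta e^{ijt})$ and conclude $T_\Phi T_\Phi^*T_\Phi=T_\Phi$. To turn your proposal into a proof you would need to carry out this (or an equivalent) extraction of algebraic consequences from the hypothesis that $\Phi(e^{it})$ is a partial isometry a.e.; as it stands, that hypothesis is never actually used.
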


\begin{proof}
Suppose that $\Phi \in \cla$ is a partial isometry a.e.\ on $\T$ such that $(\ref{con1})$ and $(\ref{con2})$ of Proposition $\ref{NC}$ holds where $\Phi_+(z)=\sum\limits_{m=0}^\infty \phi_mz^{m} \in H^\infty _{\clb(\cle,\cle_*)}$ and $\Phi_-(z)=\sum\limits_{m=0}^\infty \phi_{-m}\bar{z}^m \in \bar H^\infty_{\clb(\cle,\cle_*)}$.\\
Let $\Phi_+(e^{it})^*\Phi_+(e^{it})=T$ and $\Phi_-(e^{it})\Phi_-(e^{it})^*=V$ a.e.\ on $\T$. Then $T$ and $V$ are positive operators.
For $\eta \in \cle$, consider
\begin{align}
	\| \Phi_+\eta \|^2_{H^2_{\cle_*}(\D)}&=\frac{1}{2\pi}\int_{0}^{2\pi}\|\Phi_+(e^{it})\eta\|^2_{\cle_*} dt \nonumber\\
	&=\frac{1}{2\pi}\int_0^{2\pi}\left\langle \Phi_+(e^{it})\eta,\Phi_+(e^{it})\eta\right\rangle_{\cle_*} dt \nonumber\\
	&=\frac{1}{2\pi}\int_0^{2\pi}\left\langle \Phi_+(e^{it})^*\Phi_+(e^{it})\eta,\eta\right\rangle_{\cle} dt \nonumber\\
	&=\langle T\eta,\eta\rangle. \label{Phi1}
\end{align}
Also,
\begin{equation} \label{Phi2}
	\| \Phi_+\eta \|^2_{H^2_{\cle_*}(\D)}=\sum\limits_{m=0}^\infty \|\phi_m \eta\|^2_{\cle_*}=\left\langle \sum\limits_{m=0}^\infty \phi_m^*\phi_m\eta,\eta \right\rangle_{\cle} .
\end{equation}
Note that the infinite series $\sum\limits_{m=0}^\infty \phi_m^*\phi_m$ is  absolutely convergent since $\Phi_+ \in L^\infty_{\clb(\cle,\cle_*)} \subset L^2_{\clb(\cle,\cle_*)}$ (see Remark \ref{innerproduct}). Thus from (\ref{Phi1}) and (\ref{Phi2}), we get
\begin{equation}\label{Tdef}
T\eta =\sum\limits_{m=0}^\infty \phi_m^*\phi_m\eta \quad (\eta \in \cle) .
\end{equation}
Now for $\eta \in \cle$, consider 
\begin{align*}
L_\Phi L_\Phi^*L_\Phi(\eta) &=L_\Phi L_\Phi^*(\Phi_++\Phi_--\phi_0)\eta\\
 &=L_\Phi((\Phi_+-\phi_0)^*\Phi_+\eta+(\Phi_+-\phi_0)^*(\Phi_--\phi_0)\eta+\Phi_-^*\Phi_+\eta+\Phi_-^*(\Phi_--\phi_0)\eta)\\
 &\stackrel{(\ref{con2})}{=}L_\Phi(T-\phi_0^*\Phi_+\eta+\Phi_-^*\phi_0\eta+\phi_0^*(\Phi_+-\phi_0)\eta+\Phi_-^*(\Phi_--\phi_0)\eta)\\
 &= L_\Phi(T-\phi_0^*\phi_0+\Phi_-^*\Phi_-)\eta\\
 &=((\Phi_+-\phi_0)T-(\Phi_+-\phi_0)\phi^*_0\phi_0+(\Phi_+-\phi_0)\Phi_-^*\Phi_-+\Phi_-T-\Phi_-\phi_0^*\phi_0+\Phi_-\Phi_-^*\Phi_-)\eta\\
 &\stackrel{(\ref{con})}{=} ((\Phi_+-\phi_0)T-(\Phi_+-\phi_0)\phi_0^*\phi_0+(\Phi_+-\phi_0)\phi_0^*\Phi_-+\Phi_-T-\Phi_-\phi_0^*\phi_0+V\Phi_-)\eta. 
\end{align*}
Since $\Phi(e^{it})$ is a partial isometry a.e.\ on $\T$, $L_\Phi L_\Phi^*L_\Phi=L_\Phi$. Thus
\begin{equation}\label{p.i}
((\Phi_+-\phi_0)T+(\Phi_+-\phi_0)\phi_0^*(\Phi_--\phi_0)+\Phi_-T-\Phi_-\phi_0^*\phi_0+V\Phi_-)\eta=(\Phi_++\Phi_--\phi_0)\eta.
\end{equation}
Now let $j \geq 1$, then applying $L_{\phi_{-j}}^*$ on both sides of (\ref{p.i}) and using condition $(\ref{con2})$, we get
\begin{equation}\label{con}
(\phi_{-j}^*\Phi_-T-\phi_{-j}^*\Phi_-\phi_0^*\phi_0+\phi_{-j}^*V\Phi_-)\eta =\phi_{-j}^*\Phi_-(\eta).
\end{equation}
Comparing the $(-j)^{th}$ Fourier coefficient on both sides, we get
\[
(\phi_{-j}^*\phi_{-j}T-\phi_{-j}^*\phi_{-j}\phi_0^*\phi_0+\phi_{-j}^*V\phi_{-j})\eta=\phi_{-j}^*\phi_{-j}\eta.
\]
Since it is true for all $\eta \in \cle$, we get
\begin{align*}
&\phi_{-j}^*\phi_{-j}-\phi_{-j}^*\phi_{-j}(T-\phi_0^*\phi_0)-\phi_{-j}^*V\phi_{-j}=0\\
\implies&\phi_{-j}^*\phi_{-j}-\phi_{-j}^*\phi_{-j}\left(\sum\limits_{m=1}^\infty \phi_m^*\phi_m\right)-\phi_{-j}^*V\phi_{-j}=0 \quad \text{(By (\ref{Tdef}))}\\ 
\implies & \phi_{-j}^*\phi_{-j}-\phi_{-j}^*V\phi_{-j}=0.
\end{align*}
As $I-V\geq 0$, we have
\[
(I -V)\phi_{-j}=0, 
\]
i.e.,
\begin{equation} \label{con3}
V\phi_{-j}=\phi_{-j} \quad \forall \,j \geq 1.
\end{equation}
Similarly, one can prove (by applying $L_{\phi_j}$ on both sides of (\ref{p.i})  and comparing $j^{th}$ coefficient) that
\begin{equation}\label{con4}
\phi_jT=\phi_j \quad \forall \, j \geq 1.
\end{equation}
Also, comparing constant term of (\ref{con}), we get
\begin{align*}
&(\phi_{-j}^*\phi_0T-\phi_{-j}^*\phi_0\phi_0^*\phi_0+\phi_{-j}^*V\phi_0)\eta=\phi_{-j}^*\phi_0(\eta)\\
\implies 	&\phi_{-j}^*\phi_0(T-\phi_0^*\phi_0)=0 \quad \quad (\text{By (\ref{con3})})\\
\implies &\phi_{-j}^*\phi_0\left(\sum\limits_{m=1}^\infty \phi_m^*\phi_m\right)=0 \quad\quad (\text{By (\ref{Tdef})})\\
\implies & \sum\limits_{m=1}^\infty (\phi_m\phi_0^*\phi_{-j})^*(\phi_m\phi_0^*\phi_{-j})=0.
\end{align*}
Therefore, 
\begin{equation}\label{con5}
\phi_{m}\phi_0^*\phi_{-j}=0 \,\, ~~ \forall \,\,j,m \geq 1.
\end{equation}
In that case, using (\ref{con3}), (\ref{con4}), and (\ref{con5}), equation (\ref{p.i}) will become
\begin{align}
	&((\Phi_+-\phi_0)+\Phi_-T-\Phi_-\phi_0^*\phi_0+V\phi_0+(\Phi_--\phi_0))\eta=(\Phi_++\Phi_--\phi_0)\eta \nonumber\\ 
	\implies & (\Phi_-(T-\phi_0^*\phi_0)+V\phi_0)\eta=\phi_0\eta. \label{con6}
\end{align}
Define 
\[
T_\Phi: = P_{+}^{\cle_{*}}L_{\Phi}|_{ H^2_\cle(\D)}.
\]
Now for $\eta \in \cle$ and $j \geq 0$,
\begin{align*}
&T_\Phi T_\Phi^*T_\Phi(\eta e^{ijt})\\
&=T_\Phi T_\Phi^*\left(\Phi_+(\eta)e^{ijt}+\sum\limits_{m=0}^j \phi_{-m}(\eta)e^{i(j-m)t}-\phi_0e^{ijt}\right)\\
&=T_\Phi P_+^{\cle}\left(\Phi_+^*(\Phi_+-\phi_0)(\eta e^{ijt})+(\Phi_--\phi_0)^*(\Phi_+-\phi_0)\eta e^{ijt}+\Phi_+^*\left(\sum\limits_{m=0}^j \phi_{-m}(\eta) e^{i(j-m)t}\right)+\right.\\
&\quad\quad\quad\quad\quad\quad\quad\quad\quad\left.(\Phi_--\phi_0)^*\left(\sum\limits_{m=0}^j \phi_{-m}(\eta)e^{i(j-m)t}\right)\right)\\
&\stackrel{(\ref{con2})}{=}T_\Phi P_+^{\cle} \left(T\eta e^{ijt}-\Phi_+^*\phi_0\eta e^{ijt}+\phi_0^*\left(\sum\limits_{m=0}^j \phi_{-m}(\eta)e^{i(j-m)t}\right)+(\Phi_+-\phi_0)^*\phi_0\eta e^{ijt}+\right.\\
&\quad\quad\quad\quad\quad\quad\quad\quad\quad\left.(\Phi_--\phi_0)^*\left(\sum\limits_{m=0}^j \phi_{-m}(\eta)e^{i(j-m)t}\right)\right)\\
&=T_\Phi\left(T\eta e^{ijt}-\phi_0^*\phi_0 \eta e^{ijt}+\Phi_-^*\left(\sum\limits_{m=0}^j \phi_{-m}(\eta)e^{i(j-m)t}\right)\right)\\
&=P_+^{\cle_*}\left((\Phi_+-\phi_0)T\eta e^{ijt}+\Phi_-T\eta e^{ijt}-(\Phi_+-\phi_0)\phi_0^*\phi_0\eta e^{ijt}-\Phi_-\phi_0^*\phi_0\eta e^{ijt}+\right.\\
&\quad\quad\quad\quad\quad\quad\quad\quad\quad\left.(\Phi_+-\phi_0)\Phi_-^*\left(\sum\limits_{m=0}^j\phi_{-m}(\eta)e^{i(j-m)t}\right)+\Phi_-\Phi_-^*\left(\sum\limits_{m=0}^j\phi_{-m}(\eta)e^{i(j-m)t}\right)\right)\\
&\stackrel{(\ref{con4}),(\ref{con2})}{=}P_+^{\cle_*}\left((\Phi_+-\phi_0)\eta e^{ijt}+\Phi_-T\eta e^{ijt}-(\Phi_+-\phi_0)\phi_0^*\phi_0\eta e^{ijt}-\Phi_-\phi_0^*\phi_0\eta e^{ijt}+\right.\\
&\quad\quad\quad\quad\quad\quad\quad\quad\quad\left.(\Phi_+-\phi_0)\phi_0^*\left(\sum\limits_{m=0}^j\phi_{-m}(\eta)e^{i(j-m)t}\right)+V\left(\sum\limits_{m=0}^j\phi_{-m}(\eta)e^{i(j-m)t}\right)\right)\\
&\stackrel{(\ref{con5})}{=}P_+^{\cle_*}\left((\Phi_+-\phi_0)\eta e^{ijt}+\Phi_-T\eta e^{ijt}-(\Phi_+-\phi_0)\phi_0^*\phi_0\eta e^{ijt}-\Phi_-\phi_0^*\phi_0\eta e^{ijt}+\right.\\
&\quad\quad\quad\quad\quad\quad\quad\quad\quad\left.(\Phi_+-\phi_0)\phi_0^*\phi_0\eta e^{ijt}+V\left(\sum\limits_{m=0}^j\phi_{-m}(\eta)e^{i(j-m)t}\right)\right)\\
&\stackrel{(\ref{con3})}{=}\begin{cases}
	 P_+^{\cle_*}\left((\Phi_+-\phi_0)\eta e^{ijt}+\Phi_-(T-\phi_0^*\phi_0)\eta e^{ijt} + V\phi_0\eta e^{ijt}+\sum\limits_{m=1}^j\phi_{-m}(\eta)e^{i(j-m)t}\right),& j>0\\
	 P_+^{\cle_*}\left((\Phi_+-\phi_0)\eta e^{ijt}+\Phi_-(T-\phi_0^*\phi_0)\eta e^{ijt} + V\phi_0\eta e^{ijt}\right), & j = 0 
\end{cases} \\
&\stackrel{(\ref{con6})}{=}\begin{cases}
	 (\Phi_+-\phi_0)\eta e^{ijt}+ \phi_0\eta e^{ijt}+\sum\limits_{m=1}^j\phi_{-m}(\eta)e^{i(j-m)t}, & j>0\\
	 (\Phi_+-\phi_0)\eta e^{ijt}+ \phi_0\eta e^{ijt}, & j =0
	\end{cases}\\
&=T_\Phi(\eta e^{ijt}).
\end{align*}
Thus $T_\Phi$ is a partially isometric Toeplitz operator. This completes the proof.
\end{proof}

Combining Propositions $\ref{NC}$ and $\ref{SC}$, we get the following result.

\begin{thm}\label{Characterization-Partial}
For finite-dimensional Hilbert spaces $\cle$ and $\cle_*$, let $\Phi \in \cla$ be such that $\Phi(e^{it})$ is a nonzero partial isometry a.e.\ on $\T$. Then $T_\Phi$ is a partially isometric Toeplitz operator if and only if the following conditions are satisfied:
\begin{enumerate}
\item  $\Phi_+(e^{it})^*\Phi_+(e^{it}) $ and $\Phi_-(e^{it})\Phi_-(e^{it})^*$
are  operator-valued constant functions a.e.\ on $\T$, where $\Phi_+ \in H^\infty_{\clb(\cle,\cle_*)}$ and $\Phi_-^* \in H^\infty_{\clb(\cle_*,\cle)}$ are contractive analytic functions.
\item  $\hat{\Phi}(m)^*\hat{\Phi}(-n)=0_\cle$ and $\hat{\Phi}(-n)\hat{\Phi}(m)^*=0_{\cle_*}$ for all $m,n \geq 1$.
\end{enumerate}
\end{thm}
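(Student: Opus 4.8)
The plan is to establish the biconditional by proving the two implications separately, since the statement is exactly the synthesis of the necessity result recorded in Proposition~\ref{NC} and the sufficiency result recorded in Proposition~\ref{SC}, both already in hand for the finite-dimensional setting.

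For the \emph{necessity} (the ``only if'' direction), I would start from the assumption that $T_\Phi$ is a partially isometric Toeplitz operator with $\Phi \in \cla$ nonzero. Since $\Phi(e^{it})$ is assumed to be a nonzero partial isometry a.e.\ on $\T$, the hypotheses of Proposition~\ref{NC} are met verbatim, so conditions (1) and (2) follow at once. It is worth remarking that this hypothesis is not an extra restriction: Lemma~\ref{NC1} shows that whenever $T_\Phi$ is a nonzero partially isometric Toeplitz operator, the symbol $\Phi$ is automatically a partial isometry a.e.\ and lies in $\cla$, so membership in $\cla$ and the pointwise partial-isometry property are forced rather than assumed.

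For the \emph{sufficiency} (the ``if'' direction), I would invoke Proposition~\ref{SC} directly: given $\Phi \in \cla$ that is a nonzero partial isometry a.e.\ and satisfies (1) and (2), and given that $\cle,\cle_*$ are finite-dimensional, that proposition yields $T_\Phi T_\Phi^* T_\Phi = T_\Phi$, i.e.\ that $T_\Phi$ is partially isometric. The real content there is the chain of coefficient identities---$V\phi_{-j}=\phi_{-j}$, $\phi_j T=\phi_j$, $\phi_m\phi_0^*\phi_{-j}=0$, and the balancing relation $(\Phi_-(T-\phi_0^*\phi_0)+V\phi_0)\eta=\phi_0\eta$, with $T=\Phi_+^*\Phi_+$ and $V=\Phi_-\Phi_-^*$---all distilled from the pointwise identity $L_\Phi L_\Phi^* L_\Phi = L_\Phi$ together with the orthogonality in (2), and then fed into a direct evaluation of $T_\Phi T_\Phi^* T_\Phi$ on monomials $\eta z^j$.

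I expect the main obstacle to lie entirely on the sufficiency side, inside Proposition~\ref{SC}. Two points are delicate there: first, finite-dimensionality is needed to justify that $\sum_{m\ge 0}\phi_m^*\phi_m$ converges and equals the constant operator $T$, via the Hilbert--Schmidt identification of Remark~\ref{innerproduct}; second, carrying the cubic identity $T_\Phi T_\Phi^* T_\Phi(\eta z^j)=T_\Phi(\eta z^j)$ through the repeated applications of the projection $P_+^{\cle_*}$ requires careful index bookkeeping between the analytic part $\Phi_+$ and the co-analytic part $\Phi_-$, treated case-by-case for $j=0$ and $j>0$. Once these operator identities are secured, the assembly of the biconditional is purely formal, and the theorem follows by combining the two propositions.
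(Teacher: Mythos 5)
Your proposal is correct and follows exactly the paper's route: the theorem is obtained by combining Proposition~\ref{NC} (necessity) with Proposition~\ref{SC} (sufficiency), with Lemma~\ref{NC1} explaining why restricting to $\Phi\in\cla$ and pointwise partial isometries loses no generality. Your identification of the delicate points in the sufficiency direction (the convergence of $\sum_m\phi_m^*\phi_m$ via finite-dimensionality and the coefficient identities distilled from $L_\Phi L_\Phi^*L_\Phi=L_\Phi$) matches the paper's actual argument.
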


We conclude this section with the following remark.
\begin{rem}
In particular, for $\cle=\cle_*=\C$ (scalar-valued Hardy space), the above theorem says that for $0\neq \Phi \in L^\infty$ such that $\Phi(e^{it})\overline{\Phi(e^{it})}\Phi(e^{it})=\Phi(e^{it})$ a.e.\ on $\T$, $T_\Phi \in \clb(H^2(\D))$ is a partial isometry if and only if $\overline{\phi}_{-n}\phi_m=0$ for all $m,n \geq 1$, i.e., either all negative Fourier coefficients are zero or all positive coefficients are zero. Hence, $T_\Phi$ is either an isometry or a co-isometry which was first proved by Brown and Douglas in \cite{BD-Partially}.
\end{rem}

\section{Examples}{\label{sec5}}
In this final section we shall illustrate some examples that none of the  conditions of Theorem $\ref{Characterization-Partial}$ is redundant.

\begin{ex}
Let $\Phi \in L^\infty_{\clb(\C^3)}$ be defined by
\[
\Phi(z)=\begin{pmatrix}
0&\frac{z}{2} & \frac{\sqrt{3}}{2} \\
0 &0 &0\\
\bar{z} & 0 & 0
\end{pmatrix}.
\]
Then
\[
\Phi(z)=\phi_{-1}\bar{z} + \phi_{0} + \phi_{1}z = 
\begin{pmatrix}
0&0 & 0 \\
0 &0 &0\\
1 & 0 & 0
\end{pmatrix}\bar{z}
+  
\begin{pmatrix}
0& 0 & \frac{\sqrt{3}}{2} \\
0 &0 &0\\
0 & 0 & 0
\end{pmatrix}
+ 
\begin{pmatrix}
0&\frac{1}{2} & 0 \\
0 &0 &0\\
0 & 0 & 0
\end{pmatrix}z.
\]
Clearly, $\Phi(e^{it})\Phi(e^{it})^*\Phi(e^{it})=\Phi(e^{it})$ and hence, 
$\Phi(e^{it})$ is a partial isometry a.e.\ on $\T$.
Here,
\[
\Phi_+(e^{it})=\begin{pmatrix}
0&\frac{e^{it}}{2} & \frac{\sqrt{3}}{2}\\
0 &0 &0\\
0 &0 & 0
\end{pmatrix},
\Phi_-(e^{it})=\begin{pmatrix}
 0 &0& \frac{\sqrt{3}}{2}\\
0 &0 &0\\
e^{-it} &0 &0
\end{pmatrix}.
\]
Also
\[
\phi_1^*\phi_{-1}= \begin{pmatrix}
0&0&0\\
\frac{1}{2} &0&0\\
0&0&0
\end{pmatrix}
\begin{pmatrix}
0 & 0& 0\\
0&0&0\\
1&0&0
\end{pmatrix}=0.
\]
Similarly, one can prove that $\phi_{-1}\phi_{1}^*=0$. Also,
\[
\Phi_-(e^{it})\Phi_-(e^{it})^*=
\begin{pmatrix}
\frac{3}{4} & 0& 0\\
0&0&0\\
0 &0&1
\end{pmatrix}.
\]
But
\[
\Phi_+(e^{it})^*\Phi_+(e^{it})=
\begin{pmatrix}
0 & 0& 0\\
0&\frac{1}{4}&\frac{\sqrt{3}}{4}e^{-it}\\
0&\frac{\sqrt{3}}{4}e^{it} &\frac{3}{4}
\end{pmatrix}
\]
is not operator-valued constant function. It is easy to see that
\[T_\Phi=
\begin{pmatrix}
O &\frac{1}{2} T_{e^{it}}& \frac{\sqrt 3}{2}I\\
O & O & O\\
T_{e^{-it}} & O &O
\end{pmatrix}
\]
is not a partial isometry. 
\end{ex}

\begin{ex}
Define $\Phi \in L^\infty_{\clb(\C^3)}$ as
\[
\Phi(z)=\begin{pmatrix}
0&\frac{\bar{z}}{2} & 0 \\
0 &\frac{\sqrt{3}}{2} &0\\
z & 0 & 0
\end{pmatrix}.
\]
It is easy to check that $\Phi(e^{it})$ is a partial isometry a.e.\ on $\T$.
Here,
\[
\Phi_+(e^{it})=\begin{pmatrix}
0& 0 & 0\\
0 &\frac{\sqrt{3}}{2} &0\\
e^{it} &0 & 0
\end{pmatrix},
\Phi_-(e^{it})=\begin{pmatrix}
 0 &\frac{e^{-it}}{2}&0\\
0 &\frac{\sqrt{3}}{2} &0\\
0 &0 &0 
\end{pmatrix}
\]
We can check that $\phi_1^*\phi_{-1}=0$ and $\phi_{-1}\phi_1^*=0.$
Also note that
\[
\Phi_+(e^{it})^*\Phi_+(e^{it})=
\begin{pmatrix}
1 & 0& 0\\
0&\frac{3}{4}&0\\
0&0&0
\end{pmatrix}
\]
which is constant. But
\[
\Phi_-(e^{it})\Phi_-(e^{-it})^*=
\begin{pmatrix}
\frac{1}{4} & \frac{\sqrt{3}}{4}e^{-it}& 0\\
\frac{\sqrt{3}}{4}e^{it}&\frac{3}{4}&0\\
0&0&0
\end{pmatrix}
\]
is not a constant. Clearly, $T_\Phi$ is not a partial isometry. Hence the condition $\Phi_-(e^{it})\Phi_-(e^{it})^*$ is an operator-valued constant function cannot be dropped. 
\end{ex}

\begin{ex}
Let $\Phi \in L^\infty_{\clb(\C^2)}$ be defined by
\[
\Phi(z)=\begin{pmatrix}
\frac{z}{\sqrt2} &\frac{\bar{z}}{\sqrt{2}}\\
0&0
\end{pmatrix}.
\]
It is trivial to check that $\Phi(e^{it})$ is a partial isometry a.e.\ on $\T$ and  $\Phi_+(e^{it})^*\Phi_+(e^{it})$ and $\Phi_-(e^{it})\Phi_-(e^{it})^*$ are operator-valued constant functions. Also
\[
\phi_{-1}\phi_{1}^*=\begin{pmatrix}
0 & \frac{1}{\sqrt{2}}\\
0 &0
\end{pmatrix}
\begin{pmatrix}
\frac{1}{\sqrt{2}} & 0\\
0 &0
\end{pmatrix}=0.
\]
But
\[
\phi_1^*\phi_{-1}=\begin{pmatrix}
\frac{1}{\sqrt{2}} &0 \\
0&0
\end{pmatrix}
\begin{pmatrix}
0 &\frac{1}{\sqrt{2}}\\
0 &0
\end{pmatrix}=\begin{pmatrix}
0&\frac{1}{2}\\
0 &0
\end{pmatrix} \neq 0.
\]
It is a routine check to see that $T_\Phi$ is not a partial isometry. Thus the condition $\phi_n^*\phi_{-m}=0$ for all $m, n \geq 1$ cannot be removed. 
\end{ex}

\begin{ex}
Let $\Phi \in L^\infty_{\clb(\C^2)}$ be a partial isometric symbol defined by
\[
\Phi(z)=\begin{pmatrix}
\frac{\bar{z}}{\sqrt{2}}& 0\\
\frac{z}{\sqrt2} &0
\end{pmatrix}.
\]
It is trivial to check that $\Phi_+(e^{it})^*\Phi_+(e^{it})$ and $\Phi_-(e^{it})\Phi_-(e^{it})^*$ are constant. Also
\[
\phi_{1}^*\phi_{-1}=\begin{pmatrix}
0 & \frac{1}{\sqrt{2}}\\
0 &0
\end{pmatrix}
\begin{pmatrix}
\frac{1}{\sqrt{2}} & 0\\
0 &0
\end{pmatrix}=0.
\]
But
\[
\phi_{-1}\phi_{1}^*=\begin{pmatrix}
\frac{1}{\sqrt{2}} & 0\\
0&0
\end{pmatrix}
\begin{pmatrix}
0 &\frac{1}{\sqrt{2}}\\
0 &0
\end{pmatrix}=\begin{pmatrix}
0&\frac{1}{2}\\
0 &0
\end{pmatrix} \neq 0.
\]
One can check easily that $T_\Phi$ is not a partial isometry. 
Hence $\phi_{-m}\phi_{n}^*=0$ for all $m, n \geq 1$ cannot be dismissed.
\end{ex}

\NI\textit{Acknowledgements:} 
The second author's research work is partially supported by the Core Research Grant (CRG/2022/006891), SERB (DST), Government of India. 
\vspace{0.4cm}

\NI\textit{Data availability:}
Data sharing is not applicable to this article as no data sets
were generated or analysed during the current study.
\vspace{0.2cm}

\NI\textit{Declarations}
\vspace{0.2cm}

\NI\textit{Conflict of interest:}
The authors have no competing interests to declare.

\label{Ref}

\end{document}